\title{Connected Quandles with Order Equal to Twice an Odd Prime}
\author{James McCarron}
\address{
Maplesoft\\
615 Kumpf Drive\\
Waterloo, ON\\
CANADA N2V 1K8}
\email{james@maplesoft.com}
\date{\today}
\newtheorem{theorem}{Theorem}
\newtheorem{proposition}[theorem]{Proposition}
\newtheorem{lemma}[theorem]{Lemma}
\newtheorem{example}[theorem]{Example}
\newtheorem{definition}[theorem]{Definition}
\newcommand{\defn}[1]{\textit{\textbf{#1}}}
\newcommand{\binomial}[2]{\ensuremath{\genfrac{(}{)}{0pt}{}{#1}{#2}}}
\newcommand{\intersect}{\ensuremath{\cap}}
\newcommand{\integers}{\ensuremath{\mathbb{Z}}}
\newcommand{\alt}[1]{\ensuremath{A_{#1}}}
\newcommand{\symm}[1]{\ensuremath{S_{#1}}}
\newcommand{\intmod}[1]{\ensuremath{\integers_{#1}}}
\DeclareMathOperator{\Aut}{Aut}
\DeclareMathOperator{\Inn}{Inn}
\DeclareMathOperator{\Conj}{Conj}
\newcommand{\PSL}[2]{\ensuremath{\mathrm{PSL}(#1,#2)}}
\newcommand{\PGL}[2]{\ensuremath{\mathrm{PGL}(#1,#2)}}
\newcommand{\AGL}[2]{\ensuremath{\mathrm{AGL}(#1,#2)}}
\newcommand{\PGAMMAL}[2]{\ensuremath{\mathrm{P}\Gamma\mathrm{L}(#1,#2)}}
\newcommand{\PSp}[2]{\ensuremath{\mathrm{PSp}(#1,#2)}}
\newcommand{\Sp}[2]{\ensuremath{\mathrm{Sp}(#1,#2)}}
\newcommand{\POmega}[2]{\ensuremath{\mathrm{P\Omega}(#1,#2)}}
\newcommand{\POmegaPlus}[2]{\ensuremath{\mathrm{P\Omega^{+}}(#1,#2)}}
\newcommand{\OmegaPlus}[2]{\ensuremath{\mathrm{\Omega^{+}}(#1,#2)}}
\newcommand{\PSU}[2]{\ensuremath{\mathrm{PSU}(#1,#2)}}
\newcommand{\F}[1]{\ensuremath{\mathbb{F}_{#1}}}
\newcommand{\centre}[1]{\ensuremath{Z(#1)}}
\newcommand{\socle}[1]{\ensuremath{\mathrm{Socle}(#1)}}
\newcommand{\qua}{\ensuremath{\triangleright}}
\newcommand{\q}{\qua}
\let\iso\simeq
\begin{document}

\begin{abstract}
We show that there is an unique connected quandle of order twice an odd prime
number greater than $3$.
It has order $10$ and is isomorphic to the conjugacy class of transpositions
in the symmetric group of degree $5$.
This establishes a conjecture of L.~Vendramin.
\end{abstract}

\maketitle

\section{Introduction}\label{sect:intro}

Quandles were introduced independently in 1982 by D. Joyce~\cite{Joyce1982a}
and S. Matveev~\cite{Matveev1982} as invariants of knots.
To each knot one associates a (generally) non-associative algebraic
system called the knot quandle which turns out to be a very strong
algebraic invariant.

The number $q_{n}$ of isomorphism classes of quandles of order $n$ is
known to grow very quickly with $n$.
This was already evident with the computational determination of the number
of quandles of small orders \cite{HoNelson2005,HendersonMacedoNelson2006,OEIS:A181769}.
Recently, Blackburn showed that $q_{n}$ grows like $2^{n^{2}}$,
asymptotically~\cite{Blackburn2012}.
Because the complete set of quandles of even small orders appears to be
intractably large there has, in recent years, been considerable interest
in counting and constructing quandles of more restricted classes.
Connected quandles are of particular importance because knot quandles are
connected, and homomorphic images of connected quandles are connected.
Therefore, the finite quandles that appear as homomorphic images of knot
quandles are necessarily connected, and it is these quandles that figure in
computable invariants of knots
\cite{FennRourke1992,Kamada2002,Carter2010}.
Thus, the connected quandles of prime order,
and of order equal to the square of a prime,
have been determined~\cite{EtingofGuralnickSoloviev2001,Grana2004}.

Clauwens~\cite{Clauwens2011} computed the connected quandles up to order $14$ and showed,
in particular, that no connected quandles of order $14$ exist.
L. Vendramin computed the connected quandles to order $35$~\cite{Vendramin2011}.
This is sequence A181771~\cite{OEIS:A181771} in the
Online Encyclopaedia of Integer Sequences~\cite{OEIS}.

From this data, it may be observed that, apart from $n = 2$, the values for which
\emph{no} connected quandles appear are the numbers
$14 = 2\cdot 7$,
$22 = 2\cdot 11$,
$26 = 2\cdot 13$
and
$34 = 2\cdot 17$.
Each is equal to twice an odd prime number.
Moreover, these are all the numbers of the form $2p$,
with $p > 5$ and $2p \leq 35$.
There is, however, a connected quandle of order $10 = 2\cdot 5$.

\begin{example}[Connected Quandle of Order $10$]
The conjugacy class of transpositions in the symmetric
group $S_{5}$ of degree $5$ has length $10 = 2\cdot 5$.
Regarded as a quandle under the operation of conjugation,
it is simple and therefore connected.
\end{example}

These observations suggest our main theorem,
which establishes a conjecture of L.~Vendramin~\cite{Vendramin2012}.
\begin{theorem}\label{thm:main}
Let $Q$ be a connected quandle of order $2p$,
where $p > 3$ is a prime number.
Then $Q$ has order $10$ and is isomorphic to the
quandle of the conjugacy class of transpositions in the
symmetric group of degree $5$.
\end{theorem}

Our strategy for the proof is as follows.
First, we use an important theorem of Clauwens~\cite{Clauwens2011}
to show that our quandle is simple.
The importance of this is that we have quite a lot
of information about the structure of the inner
automorphism group, thanks to \cite{AndruskiewitschGrana2003}.
From this description of the inner automorphism group,
we construct a faithful permutation representation of
degree $2p$ on a conjugacy class.
Now, this action of the inner automorphism group may,
or may not, be primitive, and we analyse these two
possibilities separately.
For the primitive case, we need to know the primitive
groups of degree $2p$.
We derive a list of primitive groups of degree $2p$
from a result of Liebeck and Saxl.~\cite{LiebeckSaxl1985a}
Having determined these, the conclusion follows quite
easily.
For the imprimitive case, we have to work a bit harder.
We construct a different faithful permutation representation
of the inner automorphism group of prime degree $p$.
Using this, we are able to conclude that the inner
automorphism group is, in fact, doubly transitive
with simple socle, and to construct a subgroup of
index $p$ in the socle.
We then use an observation due to D.~Holt on point stabilisers
in doubly transitive groups to conclude that this case cannot occur.
The conclusion, then, is that there are no
imprimitive examples so that the result of the
primitive case applies, and we arrive at
the apocalyptic conclusion of the theorem.

The remainder of the paper is organised as follows.
We gather some relevant background material in Section~\ref{sect:prelim}.
Then, in Section~\ref{sect:prim}, we prove our main theorem for primitive quandles.
Section~\ref{sect:imprim2p} deals with the case of an imprimitive quandle,
and Section~\ref{sect:pfprim2p} contains the proof of Proposition~\ref{prop:prim2p}
which classifies the primitive groups of degree equal to twice an odd
prime.

\subsection*{Acknowledgement}
The author thanks Erik Postma and Leandro Vendramin for reading earlier
drafts of this paper.

\section{Preliminaries}\label{sect:prelim}

Let us begin by defining the principal objects of study.
\begin{definition}[Quandle]\label{def:quandle}
A \defn{quandle} is a set $Q$ together with a binary operation
$\q : Q\times Q\to Q$ which satisfies the following axioms.
\begin{enumerate}
\item[(Q1)]{For all $a$ and $b$ in $Q$, there is a unique $x$ in $Q$ such that $b = x\q a$.}
\item[(Q2)]{For all $a$, $b$ and $c$ in $Q$, we have $(a\q b)\q c = (a\q c)\q(b\q c)$.}
\item[(Q3)]{For all $a$ in $Q$, we have $a\q a = a$.}
\end{enumerate}
\end{definition}

We give several standard examples of quandles.

\begin{example}[Conjugation Quandle]\label{ex:conjq}
Let $G$ be a group and, for $a$ and $b$ in $G$,
define $a\q b = b^{-1}ab$.
Then the system $\langle G,\q\rangle$ is a quandle, called the
\defn{conjugation quandle} $\Conj{G}$ of $G$.
Moreover, any conjugacy class, or union of conjugacy classes of $G$ forms a quandle
with conjugation as the quandle operation.
\end{example}

The \defn{order} of a quandle is the cardinality of its underlying set.

\begin{example}[Trivial Quandle]\label{ex:trivq}
The \defn{trivial quandle} on a set $Q$ is defined by the
binary operation $\q$ for which $a\q b = b$, for all $a$ and $b$ in $Q$.
This is the only associative quandle operation on a set $Q$.
Any two trivial quandles of the same order are isomorphic,
and we denote the trivial quandle of order $n$ by $T_{n}$.
\end{example}

\begin{example}[Affine Quandle]\label{ex:alexq}
Let $A$ be an Abelian group, and let $\tau$ be an automorphism of $A$.
We obtain a quandle structure on the underlying set of $A$ by defining,
for $a$ and $b$ in $A$,
$a\q b = \tau a + (1 - \tau)b$.
A quandle of this form is called an \defn{Alexander quandle}
(or an \defn{affine quandle}).
\end{example}


\begin{example}[Dihedral Quandle]\label{ex:dihedralq}
The \defn{dihedral quandle} of order $n$, where $n$ is a positive integer,
is defined to be the set $\intmod{n}$ of integers modulo $n$,
together with the binary operation $\q$ defined by
$a\q b = 2b - a\pmod{n}$, for all $a$ and $b$ in $\intmod{n}$.
\end{example}


Homomorphisms, automorphisms and subquandles are defined in the
natural way.
Thus, if $Q$ and $R$ are quandles, then a map $\varphi : Q\to R$ is
a quandle \defn{homomorphism} if $\varphi(a\q b) = (\varphi a)\q(\varphi b)$
for all $a$ and $b$ in $Q$.
A homomorphism is an \defn{isomorphism} if it is bijective,
and an \defn{automorphism} of a quandle $Q$ is an isomorphism
$Q\to Q$.
The set of all automorphisms of a quandle $Q$ forms a group $\Aut Q$.

Let $Q$ be a quandle.
Because of the quandle axiom (Q1), the right translation mappings
\begin{displaymath}
\rho_{a} : Q\to Q : q\mapsto q\q a,
\end{displaymath}
for $a$ in $Q$, are bijective.
Furthermore, axiom (Q2) guarantees that the map $\rho_{a}$ is
an endomorphism of $Q$.
For, if $x$ and $y$ belong to $Q$, then we have
\begin{displaymath}
\rho_{a}(x\q y) = (x\q y)\q a = (x\q a)\q(y\q a) = (\rho_{a}x)\q(\rho_{a}y).
\end{displaymath}
Therefore, each right translation map $\rho_{a}$ is an automorphism of $Q$.

The set $\{\rho_{a} : a\in Q\}$ of right translations does not typically
form a subgroup of the symmetric group on $Q$,
but the subgroup generated by these maps is of great importance in the theory
of quandles.
\begin{definition}[Inner Automorphism Group]
Let $Q$ be a quandle.
The group $\langle\rho_{q} : q\in Q\rangle$
generated by the right translation maps $\rho_{q}$,
for $q$ in $Q$, is called the \defn{inner automorphism group} of $Q$,
and is denoted by $\Inn Q$.
\end{definition}
We frequently think of the inner automorphism group of a quandle as a
subgroup of the symmetric group on the underlying set of the quandle.
In this way, we can apply directly the theory of permutation groups to
the group of inner automorphisms.
If, as is often the case, a quandle is represented by its Cayley table,
then the right translation maps which generate the inner automorphism group
can be read off of the Cayley table directly, as they form its columns.

Of considerable interest are the ``connected'' quandles, which we define presently.
\begin{definition}[Connected Quandle]\label{def:connq}
A quandle is \defn{connected} if its inner automorphism group acts
transitively on the quandle.
\end{definition}

This paper is really about simple quandles, formally defined as follows.
\begin{definition}[Simple Quandle]
A quandle is \defn{simple} if it has more than one element and
its only proper homomorphic image is the singleton quandle.
\end{definition}

It is easy to see that a simple quandle is connected,
but there are connected quandles that are not simple.

\begin{example}[A Non-Simple Connected Quandle]
There are two connected quandles of order equal to $6$.
They are given by the Cayley tables Table~\ref{tab:conn6a} and Table~\ref{tab:conn6b}
\begin{table}
\caption{Cayley table of the first connected quandle of order $6$}\label{tab:conn6a}
\begin{tabular}{c|cccccc}
\q&$a$&$b$&$c$&$d$&$e$&$f$\\\hline
$a$&$a$&$a$&$d$&$c$&$f$&$e$\\
$b$&$b$&$b$&$e$&$f$&$c$&$d$\\
$c$&$d$&$e$&$c$&$a$&$b$&$c$\\
$d$&$c$&$f$&$a$&$d$&$d$&$b$\\
$e$&$f$&$c$&$b$&$e$&$e$&$a$\\
$f$&$e$&$d$&$f$&$b$&$a$&$f$
\end{tabular}
\end{table}
\begin{table}
\caption{Cayley table of the second connected quandle of order $6$}\label{tab:conn6b}
\begin{tabular}{c|cccccc}
\q&$a$&$b$&$c$&$d$&$e$&$f$\\\hline
$a$&$a$&$a$&$d$&$e$&$f$&$c$\\
$b$&$b$&$b$&$f$&$c$&$d$&$e$\\
$c$&$f$&$d$&$c$&$a$&$c$&$b$\\
$d$&$c$&$e$&$b$&$d$&$a$&$d$\\
$e$&$d$&$f$&$e$&$b$&$e$&$a$\\
$f$&$e$&$c$&$a$&$f$&$b$&$f$
\end{tabular}
\end{table}
(where we take the underlying set, in each case, to be
$\{ a, b, c, d, e, f \}$).
Neither quandle is simple, however,
as each admits a homomorphism onto the
(unique) connected quandle with three elements.
\end{example}

Let us now turn our attention to specific background needed for the
proof of our main result.
We begin by noting that Theorem~\ref{thm:main} has been proved,
computationally, for primes $p < 19$.
\begin{proposition}[\cite{Vendramin2011}]\label{prop:truesmall}
Theorem~\ref{thm:main} is true for $p\leq 17$.
\end{proposition}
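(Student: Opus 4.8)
The plan is to reduce the assertion to a finite verification carried out against an existing enumeration. The primes $p$ satisfying $3 < p \leq 17$ are exactly $5$, $7$, $11$, $13$, and $17$, so the orders $2p$ in question are $10$, $14$, $22$, $26$, and $34$. Each of these is at most $34 < 35$, and hence lies within the range of Vendramin's computation of all connected quandles up to order $35$~\cite{Vendramin2011} (recorded as sequence A181771~\cite{OEIS:A181771}). It therefore suffices to read off, for each of these five orders, the complete list of connected quandles that the computation produces.

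For the four orders $14$, $22$, $26$, and $34$ --- corresponding to $p \in \{7, 11, 13, 17\}$ --- I would confirm that the enumeration returns no connected quandle whatsoever, precisely the vanishing already noted from the sequence data in Section~\ref{sect:intro}. The conclusion of Theorem~\ref{thm:main} then holds vacuously for these primes, there being no quandle of the relevant order to classify.

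For the remaining value $p = 5$, the enumeration records exactly one connected quandle of order $10$. To align this with the statement of the theorem I would identify that quandle with the conjugacy class of transpositions in $S_{5}$. By the Example in Section~\ref{sect:intro}, this conjugacy class is a simple, and hence connected, quandle of order $10$; since the enumeration exhibits a unique connected quandle of that order, the two must coincide. This disposes of all five cases and establishes the proposition.

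The only genuine obstacle is that the argument rests entirely on the correctness of a machine computation. To place it on firmer footing I would want to corroborate the tabulated counts independently --- for instance by cross-checking the overlapping values against Clauwens' enumeration up to order $14$~\cite{Clauwens2011}, or by re-running an independent search for connected quandles at each of the orders $10$, $14$, $22$, $26$, and $34$ --- rather than relying on a single source.
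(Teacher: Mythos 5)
Your proposal is correct and takes essentially the same route as the paper: the paper's proof simply cites Vendramin's enumeration of connected quandles up to order $35$, which is exactly your reduction to reading off the (empty) lists at orders $14$, $22$, $26$, $34$ and the unique quandle at order $10$. Even your closing concern about trusting a single machine computation mirrors the paper, which notes that the author independently replicated Vendramin's results up to order $30$.
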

\begin{proof}
This is a statement of the computational results from \cite{Vendramin2011},
from which connected quandles are known up to order $35$.
We note only that the present author has independently replicated
Vendramin's results up to order $30$ (see \cite{McCarron2012a}).
\end{proof}

We quote the following result of Clauwens, which is the starting point
for our investigations.
\begin{theorem}[\cite{Clauwens2011}]\label{thm:simp2p}
If $p$ is a prime and $p > 3$, then a connected quandle
of order $2p$ is simple.
\end{theorem}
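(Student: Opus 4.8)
The plan is to prove the contrapositive in disguise: I would show that a connected quandle $Q$ of order $2p$ with $p>3$ prime admits no proper, nontrivial congruence, which is exactly simplicity. Suppose to the contrary that $\sim$ is such a congruence. Its classes are permuted by every right translation $\rho_a$—since $b\sim b'$ forces $a\q b\sim a\q b'$ and $a\sim a'$ forces $a\q b\sim a'\q b$—so they constitute a system of blocks of imprimitivity for $\Inn Q$. Because $Q$ is connected, $\Inn Q$ is transitive, whence all blocks share a common size $k$ dividing $2p$, and the quotient $Q/{\sim}$ is again connected, being a homomorphic image of a connected quandle, of order $2p/k$. Since $\sim$ is proper and nontrivial we have $1<k<2p$, so the only options are $k=2$ with quotient of order $p$, or $k=p$ with quotient of order $2$.

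I would first dispose of a quotient of order $2$. On a two-element set each right translation fixes both points, by (Q3) together with bijectivity, so $\Inn$ of any quandle of order $2$ is trivial and cannot be transitive; hence no connected quandle of order $2$ exists, and the case $k=p$ is vacuous. Therefore $k=2$ and $\bar{Q}:=Q/{\sim}$ is a connected quandle of prime order $p$. By the classification of Etingof, Guralnick and Soloviev~\cite{EtingofGuralnickSoloviev2001}, such a quandle is affine: it is isomorphic to $\intmod{p}$ with $a\q b=ta+(1-t)b$ for some $t\in\intmod{p}$ with $t\neq 0,1$, and its inner group is the solvable group generated by the translations of $\intmod{p}$ together with multiplication by $t$, a subgroup of $\AGL{1}{p}$.

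It remains to exclude $Q$ as a connected double cover of $\bar{Q}$, and this is where the real work lies. Fixing a block $B=\{x,y\}$, the translation $\rho_x$ lies in $\Stab(x)$ and carries $B$ to the block containing $x$, namely $B$ itself, so it also fixes $y$; thus each block is a trivial subquandle of order two. Writing the underlying set as $\intmod{p}\times\intmod{2}$, with first coordinate recording the block, the operation necessarily takes the form $(i,\epsilon)\q(j,\delta)=(ti+(1-t)j,\ \epsilon+f(i,j,\delta))$ for some $\intmod{2}$-valued function $f$, and axiom (Q2) translates into an explicit $\intmod{2}$-valued cocycle identity in $f$, while connectedness of $Q$ demands that the holonomy obtained by composing the fibre-shifts around the blocks be nontrivial. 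The heart of the argument, and the step I expect to be the main obstacle, is to show that this system of cocycle equations admits a connected solution only when $p=3$. That the constraint is genuinely sharp is already visible from the two non-simple connected quandles of order $6$ exhibited above, which are precisely such double covers of the order-$3$ quandle. Once no solution survives for $p>3$, the hypothesised congruence cannot exist, and $Q$ is simple.
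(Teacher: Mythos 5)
The first thing to note is that the paper does not prove this statement at all: it is imported verbatim from Clauwens~\cite{Clauwens2011} as the starting point of the investigation, so there is no internal proof to compare against, and your attempt has to be judged purely on its own correctness. Judged that way, your reduction is sound as far as it goes. Congruence classes are indeed permuted by every right translation (and by every $\rho_a^{-1}$, using uniqueness in (Q1) for the quotient), hence form a block system for the transitive group $\Inn Q$, so they have a common size $k\in\{2,p\}$; a connected quandle of order $2$ does not exist, since (Q3) plus bijectivity forces both right translations to be the identity, which kills $k=p$; the order-$p$ quotient is affine by \cite{EtingofGuralnickSoloviev2001}; each block $\{x,y\}$ is a trivial subquandle because $\rho_x$ fixes $x$ and preserves the block; and the resulting normal form $(i,\epsilon)\q(j,\delta)=\bigl(ti+(1-t)j,\ \epsilon+f(i,j,\delta)\bigr)$ is fully general, since every bijection of $\intmod{2}$ is a translation.

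The problem is that the proof stops exactly where the theorem begins. Everything you establish is equally valid for $p=3$, where connected double covers of the affine quandle of order $3$ do exist --- your own citation of the two connected quandles of order $6$ shows this. Consequently, the entire content of the statement resides in the claim that the (Q2) functional equation for $f$, together with the connectedness (nontrivial holonomy) requirement, has no solution when $p>3$, and for this you give no argument whatsoever; you explicitly defer it as ``the main obstacle.'' This is a genuine gap, not a routine verification left to the reader: the equation is not a linear system over $\intmod{2}$, because $f$ appears inside its own argument (through the term $\beta+f(j,k,\gamma)$), and nothing in your proposal identifies which arithmetic feature of $p>3$ versus $p=3$ (for instance, constraints on the multiplier $t$ and the orbit structure of $i\mapsto ti+(1-t)j$ on $\intmod{p}$, or a cohomological vanishing in the sense of the extension theory of \cite{AndruskiewitschGrana2003}) makes the solutions disappear. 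As written, the proposal is a correct but standard reduction plus an unproven assertion equivalent to the theorem itself.
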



In \cite{AndruskiewitschGrana2003}, Andruskiewitsch and Gra\~{n}a
described the structure of the inner automorphism group of a
simple quandle.
We summarise the results from ~\cite{AndruskiewitschGrana2003}
that we need in the following theorem.

\begin{theorem}[\cite{AndruskiewitschGrana2003}]\label{thm:simpledesc}
Let $Q$ be a simple quandle, and let $G = \Inn Q$ be its inner
automorphism group.
Suppose that the order of $Q$ is not a prime power.
Then:
\begin{enumerate}
\item[(a)]{every proper quotient of $G$ is cyclic;}
\item[(b)]{the centre $\centre{G}$ of $G$ is trivial;}
\item[(c)]{the map $\rho : Q\to G : q\mapsto\rho_{q}$ is injective, where $a\rho_{q} = a\q q$, for all $a\in Q$;}
\item[(d)]{$C = Q\rho$ is a single conjugacy class in $G$, and $G = \langle C\rangle$ (that is, $C$ generates $G$,
and we can identify $Q$ with the conjugacy class $C$ in $G$); and,}
\item[(e)]{$G$ has a unique minimal normal subgroup
\begin{displaymath}
D = [G,G] = T_{1}\times T_{2}\times\cdots\times T_{k},
\end{displaymath}
for some $k\geq 1$, where each subgroup $T_{i}$ is isomorphic to a
finite non-abelian simple group $T$.}
\end{enumerate}
\end{theorem}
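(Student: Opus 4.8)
The plan is to translate the quandle-theoretic hypothesis of simplicity into group-theoretic constraints on the normal subgroups of $G = \Inn Q$, and then read each of (a)--(e) off of those constraints. The engine driving everything is the conjugation identity for right translations. From axiom (Q2) one computes $\rho_{a\q b} = \rho_{b}\rho_{a}\rho_{b}^{-1}$, and an easy induction on word length in the generators $\rho_{q}$ upgrades this to $\rho_{g(a)} = g\rho_{a}g^{-1}$ for every $g\in G$ and every $a\in Q$. Two consequences are then immediate. First, the set $C = \{\rho_{q} : q\in Q\}$ is closed under conjugation by $G$, and since $Q$ is connected (a simple quandle is connected) the $G$-conjugacy class of any $\rho_{a}$ is $\{\rho_{g(a)} : g\in G\} = C$; hence $C$ is a single conjugacy class and $G = \langle C\rangle$ by the definition of $\Inn Q$, which is the content of (d). Second, the relation $p\sim q$ defined by $\rho_{p} = \rho_{q}$ is a congruence of $Q$: the identity $\rho_{p\q r} = \rho_{r}\rho_{p}\rho_{r}^{-1}$ reduces compatibility to a one-line check. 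By simplicity $\sim$ is trivial or universal; in the universal case all $\rho_{q}$ coincide, axiom (Q3) forces this common map to be the identity, and $G = 1$, contradicting transitivity on a set with more than one point. So $\rho$ is injective, giving (c).

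The crux is a correspondence between normal subgroups of $G$ and congruences of $Q$. Given a nontrivial normal subgroup $N$ of $G$, I would show that the partition of $Q$ into $N$-orbits is a quandle congruence: if $a' = n(a)$ and $b' = m(b)$ with $n,m\in N$, then writing $\rho_{b'} = m\rho_{b}m^{-1}$ and using the normality of $N$ one checks that $a'\q b'$ and $a\q b$ lie in the same $N$-orbit. Since $G$ acts faithfully, $N\neq 1$ makes this congruence strictly coarser than the identity, so by simplicity it is universal; that is, \emph{every nontrivial normal subgroup of $G$ acts transitively on $Q$}. If $N$ is transitive then for all $a,b$ there is $n\in N$ with $\rho_{b} = \rho_{n(a)} = n\rho_{a}n^{-1}$, whence $\rho_{a}\rho_{b}^{-1} = \rho_{a}n\rho_{a}^{-1}n^{-1} = (\rho_{a}n\rho_{a}^{-1})n^{-1}\in N$; thus the displacement group $\Delta = \langle\rho_{a}\rho_{b}^{-1} : a,b\in Q\rangle$ is contained in $N$. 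As all $\rho_{q}$ are congruent modulo $\Delta$ and generate $G$, the quotient $G/\Delta$ is cyclic, and so is its further quotient $G/N$, establishing (a). I expect this correspondence to be the main obstacle: the genuine content is recognising that quandle simplicity should be encoded as transitivity of every nontrivial normal subgroup, and verifying carefully that $N$-orbits really do form a congruence; once that is in hand the rest is standard finite group theory.

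It remains to harvest (b) and (e), and this is where the non-prime-power hypothesis does its work. For (b), $\centre{G}$ is normal, so if it were nontrivial it would be transitive and abelian, hence regular and therefore self-centralising in the symmetric group on $Q$; as $G$ centralises $\centre{G}$ this forces $G = \centre{G}$ to be abelian, whereupon $\rho_{a\q b} = \rho_{a}$ for all $a,b$ contradicts the injectivity of $\rho$ unless $G = 1$. Hence $\centre{G} = 1$. For (e), let $D$ be a minimal normal subgroup. If $G$ had two distinct ones they would intersect trivially, so each would embed in the cyclic quotient by the other (by (a)) and hence be cyclic of prime order; being transitive and abelian it would act regularly, forcing $|Q|$ to be prime and contradicting the hypothesis that $|Q|$ is not a prime power. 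So $D$ is unique. The same hypothesis excludes $D$ abelian, since an abelian minimal normal subgroup is elementary abelian and (being transitive) regular, which would make $|Q|$ a prime power; hence the characteristically simple group $D$ is a direct product $T_{1}\times\cdots\times T_{k}$ of copies of a nonabelian simple group $T$. Finally $G/D$ is cyclic by (a), so $[G,G]\leq D$, while $[G,G]\neq 1$ (otherwise $G$ is abelian, already excluded), so this nontrivial normal subgroup contains the unique minimal normal subgroup $D$. Therefore $D = [G,G]$, completing (e).
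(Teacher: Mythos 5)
Your proof is correct, but be aware that there is nothing in the paper to compare it against: the paper does not prove Theorem~\ref{thm:simpledesc} at all, it quotes it with attribution from Andruskiewitsch and Gra\~{n}a~\cite{AndruskiewitschGrana2003}, so the only thing your argument can be measured against is that original source, whose strategy you have in essence reconstructed. The pivotal idea --- that simplicity of $Q$ forces every nontrivial normal subgroup $N$ of $G=\Inn Q$ to act transitively, because the $N$-orbit partition is a congruence (normality of $N$ is exactly what lets you pull $\rho_b$ across elements of $N$ in the verification) --- is the mechanism driving the structure theory in~\cite{AndruskiewitschGrana2003}, and your remaining steps are the standard consequences: for transitive normal $N$ one gets $\rho_a\rho_b^{-1}\in N$, so every proper quotient is cyclic; a transitive abelian permutation group is regular and self-centralising, which kills $\centre{G}$ and, with the non-prime-power hypothesis, excludes abelian minimal normal subgroups and a second minimal normal subgroup; and $[G,G]=D$ then follows from (a) together with uniqueness. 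Three minor points. First, your identity $\rho_{a\q b}=\rho_b\rho_a\rho_b^{-1}$ presupposes maps composed on the left, whereas the paper writes maps on the right ($a\rho_q=a\q q$), under which the identity reads $\rho_{a\q b}=\rho_b^{-1}\rho_a\rho_b$; nothing in your argument depends on this, since only conjugacy is used. Second, in (a) the displacement group $\Delta$ is dispensable: once $\rho_a\rho_b^{-1}\in N$ for all $a,b$, the generators $\rho_qN$ of $G/N$ all coincide and $G/N$ is cyclic outright; if you do route through $G/\Delta$, you should remark that $\Delta$ is normal in $G$ (immediate from the equivariance identity), else the quotient is not defined. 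Third, your appeal to ``a simple quandle is connected'' is legitimate since the paper records this fact; it also follows from your own orbit-congruence argument with $N=G$, after noting that if all translations were trivial then every partition of $Q$ would be a congruence, which is incompatible with simplicity when $Q$ has at least three elements (and here $|Q|$ is not a prime power, so this is automatic).
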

We note that \cite{AndruskiewitschGrana2003} also describes the
structure of $\Inn Q$ for a simple quandle $Q$ of prime power order,
but we do not need those results here.

We observe that, under the identification of the quandle $Q$
with the conjugacy class $C$ in $G$, the actions of $G$ on
$Q$ by automorphisms and on $C$ by conjugation, are equivalent.
For, given arbitrary elements $a$ and $b$ in $C$, and any element
$g\in G$, the conjugates $a^{g}$ and $b^{g}$ belong to $C$,
and we have
\begin{displaymath}
(a^{g}\q b^{g})
= (b^{g})^{-1}a^{g}b^{g}
= (g^{-1}bg)^{-1}g^{-1}agg^{-1}bg
= g^{-1}b^{-1}abg
= (a\q b)^{g}.
\end{displaymath}

We shall use the following result from \cite{McCarron2012a}.
\begin{lemma}[\cite{McCarron2012a}]\label{noconnthreetrans}
A finite quandle with at least four members
and with a triply transitive group of automorphisms
is trivial.
\end{lemma}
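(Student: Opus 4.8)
The plan is to use the transitivity of the automorphism group $G$ to pin down the value $a\q b$ for distinct $a$ and $b$, showing it is forced to equal $a$ in every case, so that every right translation is the identity. First I would record that $G$-equivariance of the operation transports the ``position'' of $a\q b$ relative to the pair $\{a,b\}$: if $g\in G$ carries an ordered pair $(a,b)$ of distinct points to another such pair $(a',b')$, then, since $g$ is an automorphism, $g(a\q b)=(ga)\q(gb)=a'\q b'$. Because $G$ is triply transitive, and in particular doubly transitive, it is transitive on ordered pairs of distinct elements; hence the type of $a\q b$ relative to $\{a,b\}$ is the same for every such pair, and exactly one of the following holds uniformly over all distinct $a,b$: either (i) $a\q b=a$, or (ii) $a\q b=b$, or (iii) $a\q b\notin\{a,b\}$.

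Next I would eliminate (ii) directly from the axioms. By (Q1) there is a unique $x$ with $x\q b=b$; since (Q3) gives $b\q b=b$, that element is $x=b$. Thus $a\q b=b$ forces $a=b$, so (ii) is impossible for distinct $a,b$.

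The crux is ruling out (iii), and this is exactly where triple transitivity and the hypothesis that $Q$ has at least four members enter. Assuming (iii), fix distinct $a,b$ and set $c=a\q b$, so that $a,b,c$ are pairwise distinct. Since $Q$ has at least four members there is a fourth element $d\notin\{a,b,c\}$. The triples $(a,b,c)$ and $(a,b,d)$ each consist of three distinct points, so triple transitivity supplies $g\in G$ with $ga=a$, $gb=b$ and $gc=d$. Applying the automorphism property, $c=a\q b=(ga)\q(gb)=g(a\q b)=gc=d$, contradicting $d\neq c$. Hence (iii) cannot occur.

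Having eliminated (ii) and (iii), only (i) survives: $a\q b=a$ for all distinct $a$ and $b$, and together with (Q3) this gives $a\q b=a$ for all $a,b$. Therefore every right translation $\rho_{b}$ is the identity map, the inner automorphism group $\Inn Q$ is trivial, and $Q$ is a trivial quandle. I expect the only delicate point to be the organisation of the three-case dichotomy and the verification that triple transitivity, rather than mere double transitivity, is genuinely needed, namely in order to move the ``third value'' $c$ to a fresh point $d$ while fixing $a$ and $b$; the remaining steps follow immediately from the quandle axioms and from equivariance.
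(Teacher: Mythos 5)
Your proof is correct. Note that the paper itself gives no proof of this lemma --- it is quoted from \cite{McCarron2012a} --- so there is nothing internal to compare against; your argument (the ``type'' of $a\q b$ relative to the ordered pair $(a,b)$ is constant by double transitivity; the case $a\q b=b$ is killed by (Q1) together with (Q3); the case $a\q b\notin\{a,b\}$ is killed by triple transitivity moving the third point $c$ to a fourth point $d$ while fixing $a$ and $b$) is exactly the standard route one would expect the cited reference to take, and each step checks out: equivariance $g(a\q b)=(ga)\q(gb)$, injectivity of $g$ preserving case (iii), and the hypothesis $|Q|\geq 4$ entering precisely to supply $d$. One bookkeeping remark: your conclusion $a\q b=a$ for all $a,b$, i.e.\ every right translation $\rho_b$ is the identity and $\Inn Q$ is trivial, is the correct meaning of ``trivial'' under this paper's axioms, in which $\rho_a(x)=x\q a$; the paper's Example~\ref{ex:trivq} writes $a\q b=b$, which reflects the opposite operation convention (as literally stated it even violates (Q1)), so your formulation is the one consistent with Definition~\ref{def:quandle}.
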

Note that a quandle can have a doubly transitive
automorphism group~\cite{FermanNowikTeicher2008}.

\section{Primitive Quandles}\label{sect:prim}

Let $Q$ be a quandle with inner automorphism group $G = \Inn Q$.
If $Q$ is connected, then $G$ acts (by definition) transitively
on $Q$.
However, the action of $G$ on $Q$ may, or may not, be primitive.

\begin{definition}[Primitive and Imprimitive Quandles]
A connected quandle is said to be \defn{primitive} if its inner automorphism
group acts primitively on it.
A connected quandle is \defn{imprimitive} if its inner automorphism
group acts imprimitively on it.
\end{definition}

By considering the contrapositive, it is easy to see that a
primitive quandle is simple.
However, there do exist simple, imprimitive quandles.
\begin{example}[A Simple, Imprimitive Quandle]
The conjugacy class of the $5$-cycle $(1,2,3,4,5)$ in the alternating group
$\alt{5}$ of degree $5$ is a simple quandle of order $12$,
but its inner automorphism group, which is $\alt{5}$,
does not act primitively on it.
\end{example}

In the remainder of this section, we shall prove Theorem~\ref{thm:main}
for primitive quandles.
To this end, we shall need the following classification of primitive groups
of degree $2p$, for an odd prime $p$.

\begin{proposition}\label{prop:prim2p}
Let $G$ be a finite primitive permutation group of degree
$2p$, where $p$ is an odd prime, and suppose that
$\alt{2p}\not\leq G$.
Let $S = \socle{G}$.
Then $G$ is either soluble of degree $p$, and $G\leq\AGL{1}{p}$,
or $G$ is an almost simple group among the following cases:
\begin{enumerate}
\item[(1)]{$S = \alt{5}$ acting on $2$-sets, of degree $10$ ($p = 5$);}\label{case:alt5}
\item[(2)]{$S = M_{22}$ of degree $22$ ($p = 11$).}\label{case:m22}
\item[(3)]{$S = \PSL{2}{q}$ in its natural action of degree $q + 1$ on the projective line,
where $q$ is an odd prime, and $p = \frac{q+1}{2}$ is prime;}\label{case:psl2q}
\item[(4)]{$S = \PSL{2}{5}$ acting on cosets of a dihedral subgroup of degree $10$ ($p = 5$);}\label{case:psl25}
\item[(5)]{$S = \PSL{2}{4}$ acting on cosets of a dihedral subgroup of degree $6$ or $10$ ($p\in\{ 3, 5 \}$);}\label{case:psl24a}
\item[(6)]{$S = \PSL{2}{4}$ acting on cosets of $\PGL{2}{2}$, of degree $10$ ($p = 5$);}\label{case:psl24b}
\item[(7)]{$S = \Sp{4}{2}$, of degree $6$ or $10$ (two actions) ($p\in\{ 3, 5 \}$).}\label{case:sp42}
\end{enumerate}
\end{proposition}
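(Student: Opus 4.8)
The plan is to classify finite primitive permutation groups of degree $2p$ (with $p$ an odd prime) by appealing to the classification of primitive groups of degree $2p$ due to Liebeck and Saxl~\cite{LiebeckSaxl1985a}, and then to refine their list by imposing the specific arithmetic constraint that the degree factor as twice a prime. Since we have excluded the case $\alt{2p}\leq G$, the O'Nan--Scott theorem together with the Liebeck--Saxl analysis leaves us with the affine type and the almost simple type; the product, diagonal, and twisted wreath types can be ruled out because they would force the degree to have an incompatible factorisation (a proper prime power, a proper power, or a composite form not matching $2p$ for large $p$). So the first step is to dispose of these cases and reduce to the dichotomy stated in the proposition: either $G$ is affine of degree $p$ with $G\leq\AGL{1}{p}$, or $G$ is almost simple.

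For the soluble alternative, I would observe that a soluble primitive group has prime-power degree, so the degree $2p$ can only arise when $p = 2$, which is excluded; the genuinely soluble primitive examples of the relevant shape occur at degree $p$ (not $2p$), which is why the statement phrases the soluble case as ``of degree $p$'' with $G\leq\AGL{1}{p}$. The substantive work is the almost simple case. Here I would take $S = \socle{G}$ to be a nonabelian simple group possessing a primitive action of degree $2p$, and consult the explicit tables of Liebeck and Saxl (which enumerate the primitive groups of degree $2p$ by socle). The remaining task is purely arithmetical bookkeeping: for each family appearing in their classification I would write down the degree as a function of the defining parameters and solve the Diophantine condition that this degree equal $2p$ for a prime $p$.

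Running through the candidate socles, the alternating groups acting on $k$-subsets contribute $\alt{5}$ on $2$-sets (degree $\binom{5}{2} = 10$, giving case~(1)); the sporadic groups contribute $M_{22}$ of degree $22 = 2\cdot 11$ (case~(2)); and the groups of Lie type of small rank contribute the $\PSL{2}{q}$ families. The natural action of $\PSL{2}{q}$ on the projective line has degree $q + 1$, so requiring $q + 1 = 2p$ with both $q$ and $p$ prime yields case~(3); the low-dimensional coincidences and exceptional subgroup actions of $\PSL{2}{4}\iso\PSL{2}{5}\iso\alt{5}$ and of $\Sp{4}{2}\iso\symm{6}$ account for cases~(4) through~(7), where the sporadic small-degree actions on cosets of dihedral subgroups or of $\PGL{2}{2}$ must be checked individually. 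I expect the main obstacle to be organisational rather than conceptual: the small primes $p\in\{3,5\}$ produce numerous exceptional isomorphisms and coincidental actions (for instance $\alt{5}$ realised simultaneously as $\PSL{2}{4}$ and $\PSL{2}{5}$, and $\symm{6}$ as $\Sp{4}{2}$), so the delicate part is to enumerate these degenerate cases exhaustively without either double-counting a single abstract group under different names or overlooking one of its inequivalent primitive actions of degree $6$ or $10$.
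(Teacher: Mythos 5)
Your proposal follows essentially the same route as the paper: invoke the Liebeck--Saxl classification of primitive groups of degree $mp$ with $m < p$ (so that, excluding $\alt{2p}$, the group is soluble---impossible at degree $2p$ since soluble primitive groups have prime-power degree---or appears in their Table~3), and then retain or eliminate each entry by the Diophantine condition that the degree equal $2p$. The paper's proof consists precisely of the case-by-case arithmetic you defer as ``bookkeeping'' (some twenty-three families, including the higher-rank linear actions and the symplectic, unitary, orthogonal, Suzuki and Ree families you do not mention), so the only difference is that the paper carries that verification out in full, while your added O'Nan--Scott reduction is already subsumed in the Liebeck--Saxl statement.
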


The proof of Proposition~\ref{prop:prim2p} will be given below in
Section~\ref{sect:pfprim2p}.

We now proceed to prove our main result for primitive quandles.

\begin{theorem}\label{thm:primq2p}
Let $Q$ be a primitive quandle of order $2p$, where $p$ is an odd prime.
Then $Q$ is isomorphic to the quandle of transpositions in the symmetric
group of degree $5$.
\end{theorem}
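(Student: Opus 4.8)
The plan is to combine the structure theory of simple quandles with the classification of primitive groups of degree $2p$ in Proposition~\ref{prop:prim2p}, and then to eliminate every possibility but the one yielding the transpositions quandle. Since a primitive quandle is simple and $2p$ is not a prime power (as $p$ is odd), Theorem~\ref{thm:simpledesc} applies to $G = \Inn Q$: the quandle $Q$ may be identified with a conjugacy class $C$ of $G$, the action of $G$ on $Q$ is equivalent to conjugation on $C$, and $G$ has a non-abelian simple minimal normal subgroup, so $G$ is insoluble. The single fact I will lean on throughout is that, under this identification, the stabiliser of a point $c \in C$ in the quandle action is precisely the centraliser $C_{G}(c)$. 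Before invoking the classification I would verify its hypothesis $\alt{2p} \not\le G$: were $\alt{2p} \le G$, then, since $\alt{2p}$ is triply transitive for $2p \ge 6$, the group $\Aut Q$ would be triply transitive on at least four points, forcing $Q$ to be trivial by Lemma~\ref{noconnthreetrans}, contrary to $Q$ being a simple quandle of order $2p > 2$.

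With these reductions in hand, Proposition~\ref{prop:prim2p} applies, and the insolubility of $G$ rules out the soluble alternative, leaving the almost simple actions (1)--(7). The main obstacle is case (3), the only family admitting arbitrarily large $p$: here $\socle{G} = \PSL{2}{q}$ with $q$ an odd prime, so $G$ is $\PSL{2}{q}$ or $\PGL{2}{q}$, acting on the projective line of degree $q+1 = 2p$, an action whose point stabilisers are Borel subgroups. Because the point stabiliser of the quandle action is $C_{G}(c)$, this case forces $C_{G}(c)$ to be a Borel subgroup $B$; but one always has $c \in \centre{C_{G}(c)}$, whence $c \in \centre{B}$. For $q \ge 5$ --- which is forced, since $p = \frac{q+1}{2}$ being an odd prime gives $q \ge 5$ --- the Borel subgroup of either $\PSL{2}{q}$ or $\PGL{2}{q}$ has trivial centre, so $c = 1$, contradicting $|C| = 2p > 1$. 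This disposes of case (3) for every admissible $q$, and with it all large primes.

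It then remains only to treat cases (1), (2), (4), (5), (6) and (7), each of which forces $p \in \{3, 5, 11\}$, all at most $17$. For $p \in \{5, 11\}$ I would invoke Proposition~\ref{prop:truesmall}, which yields the conclusion of Theorem~\ref{thm:main} outright: a connected quandle of order $10$ must be the transpositions quandle of $\symm{5}$ (realised in case (1)), while no connected quandle of order $22$ exists. For $p = 3$ the order is $6$, and since both connected quandles of order $6$ are non-simple whereas a primitive quandle is simple, no primitive quandle of order $6$ occurs. Hence in every surviving case $Q$ is isomorphic to the quandle of transpositions in $\symm{5}$, completing the proof. I expect the heart of the argument to be the centraliser-versus-Borel observation eliminating case (3), since this is what defeats the infinite family and reduces everything to the computationally settled range $p \le 17$.
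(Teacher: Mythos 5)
Your proposal is correct and takes essentially the same route as the paper: after identifying $Q$ with a conjugacy class $C$ of $G=\Inn Q$ via Theorem~\ref{thm:simpledesc}, both arguments dispose of the crucial $\PSL{2}{q}$ case of Proposition~\ref{prop:prim2p} by noting that a point stabiliser of the conjugation action would be a centraliser $C_{G}(c)$, which contains $c$ in its centre, whereas such stabilisers have trivial centre (the paper cites \cite[Lemma 7]{EtingofGuralnickSoloviev2001} for this, where you verify the Borel-subgroup fact directly). The only differences are organizational: the paper assumes $p>17$ at the outset by Proposition~\ref{prop:truesmall}, so that only the alternating-socle and $\PSL{2}{q}$ cases survive, while you work through all seven listed cases, rule out the soluble alternative explicitly via insolubility of $G$, and --- unlike the paper --- explicitly dispatch $p=3$.
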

\begin{proof}
We may (and do) suppose that $p > 17$, by Proposition~\ref{prop:truesmall}.

Since $Q$ is primitive it is, by definition, connected.
By Theorem~\ref{thm:simp2p}, $Q$ is simple.
Let $G = \Inn Q$ be the inner automorphism group of $Q$.
Since the order of $Q$ is a not a prime power,
we have from
Theorem~\ref{thm:simpledesc}
that $G$
is a non-abelian group whose proper quotients are cyclic,
and $G$ has an unique minimal normal subgroup $D$ isomorphic
to a direct power of a non-abelian finite simple group $T$.
Furthermore, $G$ has a generating conjugacy class $C$,
of length $2p$, such that $Q$ is isomorphic to the conjugation
quandle defined on the conjugacy class $C$.
Finally, the action of $G$ on $Q$ is permutation isomorphic
to the action of $G$ on $C$ by conjugation.

By hypothesis, $Q$ is a primitive quandle,
so the action of $G$ on $C$ is primitive.
If $G$ has alternating socle (in its natural action) then,
since $2p > 5$, it follows that $G$ is (at least) triply transitive.
This case is excluded by Lemma~\ref{noconnthreetrans}.
From the supposition that $p > 17$,
and the classification of primitive groups of degree $2p$,
we see that $G$ is an almost simple group with socle
$\PSL{2}{q}$, for $q$ a power of an odd prime,
acting naturally on $1$-dimensional subspaces of
$\F{q}^{2}$.
Thus, $\PSL{2}{q}\leq G\leq \PGAMMAL{2}{q}$ and so,
if $H$ is the stabiliser of a point, then $H$ has trivial centre,
by \cite[Lemma 7]{EtingofGuralnickSoloviev2001}.
But this means that $H$ cannot be the centraliser of any element $x\in C$,
since every such element $x$ belongs to its own centraliser.
This completes the proof.
\end{proof}

\section{Imprimitive Quandles}\label{sect:imprim2p}

We consider in this section the case of an imprimitive quandle $Q$,
by showing that none of order equal to twice an odd prime exist.

We shall need a number of results on finite permutation groups.
The following result is due to Burnside.
\begin{theorem}[\cite{Burnside1901}]\label{thm:burnside}
A transitive permutation group of prime degree is either
soluble or doubly transitive.
\end{theorem}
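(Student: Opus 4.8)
The plan is to prove the dichotomy by analysing a Sylow $p$-subgroup of $G$, where $p$ is the prime degree, and splitting into the cases where it is or is not normal. First I would observe that, because the degree $p$ is prime, any block of imprimitivity has size dividing $p$, hence size $1$ or $p$; so a transitive group of degree $p$ is automatically primitive. Next I would pin down the Sylow structure. Since $p$ divides $|G|$ and $G$ embeds in $\symm{p}$, a Sylow $p$-subgroup $P$ must act transitively on the $p$ points: its orbit lengths are powers of $p$ summing to $p$, and if all were $1$ then $P$ would fix every point and be trivial by faithfulness, contradicting $p\mid |G|$. A transitive $p$-group of degree $p$ is regular, so $P$ is cyclic of order $p$, generated by a $p$-cycle $\sigma$. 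The centraliser of a $p$-cycle in $\symm{p}$ is $\langle\sigma\rangle$ itself, whence $C_{G}(P) = P$ and $N_{G}(P)/P$ embeds in $\Aut(P)\iso\intmod{p-1}$. Thus $N := N_{G}(P) = PE$ is a semidirect product of $P$ by a cyclic group $E$ of order $e$ dividing $p-1$; it is a Frobenius group contained in $\AGL{1}{p}$, which is soluble.

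The dichotomy now turns on the number $n_{p} = [G:N]$ of Sylow $p$-subgroups, which satisfies $n_{p}\equiv 1\pmod{p}$ by Sylow's theorem. If $n_{p} = 1$ then $P$ is normal in $G$, so $G = N \le \AGL{1}{p}$ is soluble and we are done. The substance of the theorem is the case $n_{p} > 1$, where I would show that $G$ is doubly transitive; equivalently, writing the permutation character as $\pi = 1_{G} + \Theta$ with $\Theta(1) = p-1$, that $\Theta$ is irreducible. The crucial elementary input is that $\pi$ restricts to the regular character of $P$: every nonidentity power $\sigma^{j}$ is again a $p$-cycle and so fixes no point, giving $\pi(\sigma^{j}) = 0$ and $\pi(1) = p$. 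Consequently $\Theta|_{P}$ contains each nontrivial linear character of $P$ exactly once, and does not contain the trivial one.

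To convert this into irreducibility I would decompose $\Theta = \sum_{i} m_{i}\chi_{i}$ and exploit two facts about the powers of $\sigma$. First, since $P$ is abelian, Burnside's fusion lemma shows that $\sigma^{i}$ and $\sigma^{j}$ are conjugate in $G$ precisely when they are conjugate in $N$; as $E$ acts on $P\setminus\{1\}$ as a subgroup of order $e$ of $(\intmod{p})^{\times}$, the $p-1$ nonidentity powers of $\sigma$ fall into $(p-1)/e$ conjugacy classes of $G$, each of size $e$. Second, $\Gal(\rationals(\zeta_{p})/\rationals)\iso(\intmod{p})^{\times}$ permutes the values $\chi_{i}(\sigma^{j})$ compatibly with this class structure. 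Combining these with the integrality of $\langle\chi_{i}|_{P}, 1_{P}\rangle = \tfrac{1}{p}\bigl(\chi_{i}(1) + \sum_{j=1}^{p-1}\chi_{i}(\sigma^{j})\bigr)$ and with the relation $\sum_{i} m_{i}\chi_{i}(\sigma) = \Theta(\sigma) = -1$, one forces the multiplicities and constituent degrees to be so constrained that a single constituent must occur, unless $e = p-1$ in a way that returns us to the normal case. I expect this character-theoretic step to be the main obstacle: the reductions above are routine permutation-group bookkeeping, but the passage from ``$P$ not normal'' to ``$\Theta$ irreducible'' is the genuine content of Burnside's theorem and needs the Galois-and-integrality argument (equivalently, the theory of Frobenius groups) rather than pure counting. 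Once double transitivity is established in this case, the contrapositive yields that a non-doubly-transitive $G$ must have $n_{p} = 1$ and hence be soluble, which is exactly the asserted dichotomy.
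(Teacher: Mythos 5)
The paper gives no proof of this statement for you to be compared against: it is quoted verbatim as Burnside's theorem with a citation to the 1901 original, and is used downstream as a black box. So the only question is whether your attempt is itself a proof, and it is not, although the framing is right. Your reductions are all correct and standard: a transitive group of prime degree $p$ is primitive; a Sylow $p$-subgroup $P$ is regular, cyclic of order $p$, generated by a $p$-cycle $\sigma$; $C_{G}(P)=P$, so $N_{G}(P)$ is a soluble Frobenius group inside $\AGL{1}{p}$; and if $n_{p}=1$ then $G=N_{G}(P)$ is soluble. The character-theoretic bookkeeping is also correct: the permutation character restricts to the regular character of $P$, so $\Theta|_{P}$ is the multiplicity-free sum of the nontrivial linear characters of $P$; fusion in $P$ is controlled by $N_{G}(P)$; and $\Gal(\rationals(\zeta_{p})/\rationals)$ permutes the constituents of $\Theta$ compatibly. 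This is indeed how Burnside's argument begins.

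The gap is exactly the step you flag as ``the main obstacle'': deducing that $n_{p}>1$ forces $\Theta$ to be irreducible. This cannot be recovered by ``combining'' the ingredients you list, because every one of those ingredients also holds in a soluble, non-doubly-transitive group. Take the Frobenius group of order $21$ acting on $7$ points: there $\Theta=\chi_{1}+\chi_{2}$ with both constituents of degree $3$, taking on $\sigma$ the two Gaussian periods $(-1\pm\sqrt{-7})/2$ summing to $-1$; the six $7$-elements fall into $(p-1)/e=2$ classes of size $e=3$; the Galois group permutes $\chi_{1},\chi_{2}$; and all the inner products $\langle\chi_{i}|_{P},1_{P}\rangle$ are integers. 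So fusion, Galois symmetry, integrality and $\Theta(\sigma)=-1$ are jointly consistent with reducibility, and any argument using only them proves nothing; the hypothesis that $P$ is not normal must enter essentially, and your sketch never says how it would. (Your escape clause ``unless $e=p-1$ in a way that returns us to the normal case'' is also not coherent: $e=p-1$ holds in $\symm{p}$, where $P$ is not normal and $G$ is doubly transitive, and in $\AGL{1}{p}$, where $P$ is normal and $G$ is again doubly transitive -- the two sides of the dichotomy are not exclusive.) What is actually required is the genuine content of Burnside's argument: for instance, showing that reducibility of $\Theta$ forces the constituent values on $\sigma$ to be Gaussian periods attached to a proper subgroup $H\leq(\intmod{p})^{\times}$, and then deducing that $G$ preserves the resulting cyclotomic structure on the $p$ points and hence embeds in $\AGL{1}{p}$, so that $P$ is normal after all; Schur's proof via Schur rings (as in Wielandt's book) does equivalent work. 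Since that is precisely the part you have deferred, what you have is a correct setup plus a citation-shaped hole where the theorem lives.
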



We also need the following result.
The author thanks Derek Holt for explaining his proof
of this result.
\begin{lemma}[\cite{MO75672}]\label{lem:stabsub2cent}
Let $G$ be a doubly transitive group of prime degree $p$,
acting on a set $\Omega$.
Let $H$ be the stabiliser of a point in $\Omega$,
and let $N$ be a subgroup of index $2$ in $H$.
Then the centre of $N$ is trivial.
\end{lemma}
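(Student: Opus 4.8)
The plan is to argue by contradiction: suppose $\Centre{N}\neq 1$ and fix a nontrivial $z\in\Centre{N}$. Throughout I work inside $\symm{\Omega}\cong\symm{p}$, using two standard facts: since $G\le\symm{p}$ we have $p\parallel|G|$, so a Sylow $p$-subgroup of $G$ is generated by a single $p$-cycle $\sigma$ and is self-centralising, $C_{\symm{p}}(\sigma)=\langle\sigma\rangle$; and the centraliser of a transitive permutation group is semiregular. A first observation records that $\centre{G}=1$: a nontrivial central element would centralise the transitive group $G$, hence be semiregular, hence a $p$-cycle $c$ with $G\le C_{\symm{p}}(c)=\langle c\rangle$, which is impossible for $p>2$. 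I should flag here that the literal statement fails in the soluble case of Burnside's dichotomy (Theorem~\ref{thm:burnside}), where $G\le\AGL{1}{p}$ and $H$ is cyclic; this is excluded in our setting, since in Section~\ref{sect:imprim2p} the group $G$ is almost simple with nonabelian simple socle.

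The key structural step is to replace $z$ by a central element of $N$ having a \emph{single} fixed point. Since $N$ is a normal subgroup of index $2$ in the point stabiliser $H=G_{\alpha}$, and $H$ is transitive on $\Omega\setminus\{\alpha\}$, the group $N$ has one or two orbits there; as $z$ centralises $N$, its restriction to each $N$-orbit centralises a transitive group and so is semiregular, i.e. trivial or fixed-point-free on that orbit. If $N$ is transitive off $\alpha$, then $z$ is fixed-point-free there and fixes $\alpha$ alone. If $N$ has two orbits $\Delta_{1},\Delta_{2}$ (each of size $(p-1)/2$, interchanged by any $t\in H\setminus N$) and $z$ is trivial on one of them, I replace $z$ by $w=z\,z^{t}$: since $\Centre{N}$ is characteristic in the normal subgroup $N$ of $H$ we still have $w\in\Centre{N}$, and a direct check shows $w$ is fixed-point-free on both $\Delta_{1}$ and $\Delta_{2}$, so $w\neq 1$ fixes only $\alpha$. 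Thus in every case I obtain $1\neq z\in\Centre{N}$ with fixed-point set $\{\alpha\}$.

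With a unique fixed point the centraliser is pinned down: any $g$ commuting with $z$ permutes $\mathrm{Fix}(z)=\{\alpha\}$, so $C_{G}(z)\le H$, and since $N\le C_{G}(z)$ we get $C_{G}(z)\in\{N,H\}$. Hence the class $z^{G}$ has size $p$ or $2p$ and, as $p\nmid|H|$, the element $z$ has order prime to $p$. (The value $|z^{G}|=2$ is separately impossible: an index-$2$ centraliser would contain $\sigma$, forcing $z\in C_{\symm{p}}(\sigma)=\langle\sigma\rangle$, which contradicts that $z$ fixes a point.) Because $G$ is almost simple, the normal closure $\langle z^{G}\rangle$ contains the socle $S$, and the problem reduces to controlling point stabilisers in the almost simple doubly transitive groups of prime degree.

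The final step, which I expect to be the genuine obstacle, is to exclude such a $z$ in the almost simple case, and here I would appeal to the classification of doubly transitive groups of prime degree $p$. Apart from $\alt{p}\le G\le\symm{p}$ --- whose stabiliser $\alt{p-1}$ has trivial centre and, for $p\ge 5$, no subgroup of index $2$ at all, so that the conclusion is immediate or vacuous --- the possibilities have socle $\PSL{d}{q}$ with $(q^{d}-1)/(q-1)=p$ acting on projective space, or socle $\PSL{2}{11}$, $M_{11}$ or $M_{23}$. In each case the point stabiliser is, up to index $2$, a parabolic or an explicit small group whose relevant subgroups are readily checked to have trivial centre, contradicting $z\in\Centre{N}\setminus\{1\}$. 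The heavy input is precisely this appeal to the classification; the permutation-theoretic reductions above serve only to isolate one well-behaved central element and to confine $G$ to the almost simple regime where the explicit list is available.
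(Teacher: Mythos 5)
You should first know that the paper contains no internal proof of this lemma to compare against: it is imported wholesale from Derek Holt's answer cited as \cite{MO75672}, so your argument has to stand on its own. The parts you actually prove are correct. Since $N$ is normal in $H$ and $H$ is transitive on $\Omega\setminus\{\alpha\}$, the subgroup $N$ has one or two orbits there; the restriction of a central element of $N$ to each orbit is semiregular, and your $z\,z^{t}$ device does produce a nontrivial element of $\Centre{N}$ whose only fixed point is $\alpha$, giving $N\leq C_{G}(z)\leq H$ and $|z^{G}|\in\{p,2p\}$. Your caveat about solubility is also a genuine catch rather than a quibble: for $G=\AGL{1}{p}$ with $p\geq 5$ the stabiliser $H$ is cyclic of order $p-1$ and its index-$2$ subgroup equals its own nontrivial centre, so the lemma as printed is literally false; it needs the insolubility (indeed almost simplicity) hypothesis that is available at the point of application in Section~\ref{sect:imprim2p}.

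The genuine gap is your final step, which is not a loose end but the entire content of the lemma. Once you invoke the CFSG list of doubly transitive groups of prime degree, nothing from your reductions (unique fixed point, order prime to $p$, class size $p$ or $2p$) is used again, and the assertion that the relevant subgroups are ``readily checked to have trivial centre'' simply restates what is to be proved for the one infinite family $\PSL{d}{q}\leq G\leq\PGAMMAL{d}{q}$ with $p=(q^{d}-1)/(q-1)$. There $N$ is not ``a parabolic'': $G$ may contain field automorphisms, and when $q$ is odd and the field part of $H$ has even order, $H$ has up to three index-$2$ subgroups, cut out by determinant-square and field-automorphism conditions, all of which must be handled. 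A uniform way to finish, which I recommend you write out: an index-$2$ subgroup is normal with abelian quotient, so $H'\leq N$ and hence $\Centre{N}\leq C_{H}(H')$; it therefore suffices to show $C_{H}(H')=1$ in every case where $H$ has an index-$2$ subgroup at all. For the projective family, primality of $p$ forces $\gcd(d,q-1)=1$, so the socle is $\SL{d}{q}$, and $H'$ contains the unipotent radical $U\iso\F{q}^{d-1}$ together with (for $d\geq 3$) the image of $\SL{d-1}{q}$, or (for $d=2$, where $q$ is necessarily even) a nontrivial subgroup of the torus; an element of $H$ centralising $U$ must lie in $U$ itself (semilinearity kills the field part, and $\gcd(d,q-1)=1$ kills scalars), and $H'$ fixes no nonzero vector of $U$, so $C_{H}(H')=1$. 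The remaining cases are then immediate: $G=\symm{p}$ gives $N=\alt{p-1}$, $M_{11}$ gives $N\iso\alt{6}$, and the stabilisers in $\alt{p}$, in $\PSL{2}{11}$ of degree $11$, and in $M_{23}$ have no index-$2$ subgroups. Until this case analysis is actually carried out, what you have is a correct reduction plus a citation of CFSG, not a proof.
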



\begin{proposition}
A connected quandle of order $2p$, where $p$ is an odd prime,
is primitive.
\end{proposition}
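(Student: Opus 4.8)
The plan is to argue by contradiction. Suppose $Q$ is a connected quandle of order $2p$ (with $p>3$) whose inner automorphism group $G=\Inn Q$ acts imprimitively on $Q$; I will derive a contradiction, so that no such imprimitive quandle exists. By Theorem~\ref{thm:simp2p}, $Q$ is simple, whence Theorem~\ref{thm:simpledesc} applies: since $2p$ is not a prime power, $\centre{G}=1$, every proper quotient of $G$ is cyclic, and $G$ has a unique minimal normal subgroup $D=[G,G]=T_1\times\cdots\times T_k$ with each $T_i$ isomorphic to a fixed non-abelian finite simple group $T$. Moreover $Q$ is identified with a generating conjugacy class $C$ of length $2p$ on which $G$ acts by conjugation; the stabiliser of a point $x\in C$ is its centraliser $H=C_G(x)$, and $x$ is a non-identity element of $\centre{H}$ (every element of $H$ commutes with $x$, and $x\neq 1$ because $|C|=2p>1$). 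The entire argument aims to contradict $\centre{H}=1$.

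Because $D$ is perfect and $\centre{G}=1$, the orbits of $D$ on $C$ cannot have size $1$ (else $D\le\centre{G}$) or $2$ (else $D$ would surject onto $\symm{2}$); since these orbits form a block system, each has size $p$ or $2p$. Next I would record a general fact about connected quandles: there is no block system with exactly two blocks, for each translation $\rho_q$ fixes $q$ (as $q\q q=q$) and hence stabilises the block through $q$, so the image of $G=\la\rho_q:q\in Q\ra$ in $\symm{2}$ is trivial, contradicting transitivity on two blocks. A nontrivial block has size $2$ or $p$, and size $p$ would give two blocks; hence imprimitivity forces a block system with $p$ blocks of size $2$. The action of $G$ on these $p$ blocks is faithful, for a nontrivial kernel would contain the unique minimal normal subgroup $D$, forcing each $D$-orbit into a single block of size $2$ and so into size at most $2$, contrary to the preceding sentence. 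Thus $G$ is a faithful transitive group of prime degree $p$; being non-soluble (it contains $D$), it is doubly transitive by Burnside's theorem (Theorem~\ref{thm:burnside}), and its socle is the simple group $D=T$, whose point stabiliser has index $p$.

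Finally I would apply Derek Holt's result. In the degree-$p$ action the stabiliser of a block $B$ is a subgroup $L$ containing $H$ with $|L:H|=2$, since fixing $x$ gives $H\le L$ while $L$ acts transitively on the two points of $B$. Applying Lemma~\ref{lem:stabsub2cent} to the doubly transitive group $G$ of prime degree $p$, with point stabiliser $L$ and its index-$2$ subgroup $H$, yields $\centre{H}=1$, contradicting the non-identity element $x\in\centre{H}$ found at the start. Hence no imprimitive connected quandle of order $2p$ exists, and every connected quandle of order $2p$ is primitive. I expect the main obstacle to be the careful identification of the point stabiliser: one must verify that the degree-$p$ block action is genuinely faithful and doubly transitive, and, crucially, that $H$ occurs as an index-$2$ subgroup of the block stabiliser $L$ (so that Holt's lemma is applied with $H$ playing the role of the index-$2$ subgroup, not of the stabiliser itself)---it is precisely this placement that converts $x\in\centre{H}$ into the desired contradiction.
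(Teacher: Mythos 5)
Your proof is correct and follows essentially the same route as the paper's: Clauwens' simplicity theorem, the Andruskiewitsch--Gra\~{n}a structure theorem, reduction to a faithful, doubly transitive action of prime degree $p$ via Burnside's theorem, and the final contradiction from Holt's lemma applied to the centraliser $C_G(x)$ as an index-$2$ subgroup of the degree-$p$ point stabiliser. The differences are only presentational: you argue with block systems and $D$-orbits where the paper works with the subgroup chain $Z < M < G$ (your block stabiliser $L$ is the paper's $M$, and your exclusion of $D$-orbits of size $1$ or $2$ matches the paper's case $MD = M$), and your exclusion of a two-block system via axiom (Q3) plays the role of the paper's exclusion of an index-$2$ subgroup $M$ via normality and the generating class $C$.
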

\begin{proof}
Let $Q$ be a connected quandle of order $2p$,
where $p > 17$ is an odd prime.
Again, by Clauwen's Theorem~\ref{thm:simp2p}, $Q$ is simple.
As in the primitive case, $G = \Inn Q$ has a generating conjugacy class $C = x^{G}$,
where $x \in G$,
of length $\mid C\mid = 2p$,
and an unique minimal normal subgroup
$D = [G,G] = T_{1}\times T_{2}\times\cdots\times T_{k}$,
with each $T_{i}\iso T$, a finite non-abelian simple group.
(Hence $D = \socle{G}$.)

Suppose, for an eventual contradiction, that $Q$ is imprimitive;
that is, (after identifying $Q$ with $C$) the action of $G$ on $C$ is imprimitive.

Since $G$ acts imprimitively on $C$, the centraliser
$Z := Z_{G}(x)$ is not maximal in $G$.
Therefore, there is a subgroup $M$ of $G$ such that
\begin{displaymath}
Z < M < G,
\end{displaymath}
with each inclusion proper.
Since the index $[G:Z] = 2p$, it follows that either $[G:M] = 2$
or $[G:M] = p$.
If $M$ has index $2$ in $G$, then $M$ is normal in $G$.
But then, since $x\in Z\leq M$, it follows that $M$ contains
$C = x^{G}$.
Since $C$ generates $G$, we have $M = G$, a contradiction.
Therefore,
\begin{displaymath}
[G:M] = p.
\end{displaymath}
Since $[G:Z] = 2p$, it follows that $[M:Z] = 2$
(and so, $Z$ is normal in $M$).

Now, $M\leq MD\leq G$, and $MD$ is a subgroup of $G$ since $D$ is normal,
so either $MD = M$ or $MD = G$, by the maximality of $M$ in $G$.


Suppose, first, that $MD = M$, so that $D\leq M$.

Now, $Z\leq ZD\leq M$, so either $ZD = Z$ or $ZD = M$,
because $Z$ is maximal in $M$.

Suppose that $ZD = Z$; then $D\leq Z$, so that $D$ commutes with $x$.
Let $y\in C$ and choose $g\in G$ such that $y = x^{g}$.
Then
\begin{displaymath}
D = D^{g}\leq Z^{g} = Z_{G}(x^{g}) = Z_{G}(y).
\end{displaymath}
Since $y\in C$ was arbitrary, it follows that
\begin{displaymath}
D\leq\bigcap_{c\in C} Z_{G}( c ) \leq Z(G) = 1,
\end{displaymath}
since $G = \langle C\rangle$.
This is a contradiction, so $ZD\neq Z$, and therefore $ZD = M$.

Since $ZD = M$, we have
\begin{displaymath}
\mid M\mid = \mid DZ\mid = \frac{\mid D\mid\mid Z\mid}{\mid D\intersect Z\mid}.
\end{displaymath}
Using $\mid M\mid = \frac{\mid G\mid}{[G:M]} = \frac{\mid G\mid}{p}$,
and $\mid Z\mid = \frac{\mid G\mid}{[G:Z]} = \frac{\mid G\mid}{2p}$,
we obtain
\begin{displaymath}
\frac{\mid G\mid}{p} = \frac{\mid D\mid\mid G\mid}{2p\mid D\intersect Z\mid};
\end{displaymath}
whence
\begin{displaymath}
2 = \frac{\mid D\mid}{\mid D\intersect Z\mid} = [D : D\intersect Z].
\end{displaymath}
Consequently, $D\intersect Z$ is a normal subgroup of index $2$ in $D$.
But $D$ is a direct power of a non-abelian simple group, so this is impossible.
(The normal subgroups of $D = T_{1}\times T_{2}\times\cdots\times T_{k}$
are all of the form $\prod_{i\in I}T_{i}$, for some subset $I$ of $\{1,2,\ldots, k\}$.)


Consequently, we must have $G = MD$.

From the formula
\begin{displaymath}
\mid G\mid = \frac{\mid M\mid\mid D\mid}{\mid D\intersect M\mid}
\end{displaymath}
we get
\begin{displaymath}
[D : D\intersect M] = p.
\end{displaymath}
In particular, $D\intersect M$ is properly contained in $D$.

The core $M_{G}$ of $M$ in $G$ is a normal subgroup of $G$,
so the intersection $D\intersect M_{G}$ is a normal subgroup
of $G$ contained in $D$.
By the minimality of $D$, we must therefore have either
$D\intersect M_{G} = 1$ or $D\intersect M_{G} = D$.
But $D\intersect M_{G}\leq D\intersect M < D$,
so $D\intersect M_{G} = 1$.

Now, since $M_{G}$ is normal in $G$, if $M_{G}\neq 1$,
then $M_{G}$ contains a minimal normal subgroup of $G$
disjoint from $D$.
But this contradicts the uniqueness of $D$.
Therefore, $M_{G} = 1$, and $G$ acts faithfully on the
cosets of $M$ in $G$.
This means that $G$ is a transitive group of degree
$p = [G:M]$.
By Burnside's Theorem~\ref{thm:burnside} (since $G$ is insoluble),
this action of $G$ on the cosets of $M$ is doubly transitive.
Since $G$ is insoluble, it is almost simple and,
in particular, $D$ is a simple group.
Now Lemma~\ref{lem:stabsub2cent} supplies a final
contradiction, since $Z$, being a centraliser of $x$,
has nontrivial centre, and has index equal to $2$ in
the point stabiliser $M$.



This completes the proof.
\end{proof}

\section{Proof of Proposition~\protect\ref{prop:prim2p}}\label{sect:pfprim2p}

Our proof of Proposition~\ref{prop:prim2p} is based on the following result
of M. W. Liebeck and J. Saxl.

\begin{theorem}[\cite{LiebeckSaxl1985a}]
Let $G$ be a primitive permutation group of degree $mp$,
where $p$ is a prime and $m < p$, and assume that $G$ does
not contain $A_{mp}$.
Then either $G$ is soluble or $G$ is one of the groups in
\cite[Table 3]{LiebeckSaxl1985a}.
\end{theorem}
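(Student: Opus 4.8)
The plan is to prove this classification through the O'Nan--Scott theorem together with the classification of finite simple groups, using as the central arithmetic lever the elementary observation that the hypothesis $m < p$ forces $mp < p^{2}$; that is, $p > \sqrt{mp}$, so the prime $p$ is a divisor of the degree exceeding its square root, and since $m < p$ the prime $p$ divides $mp$ exactly once. I would first extract the structural consequence that a Sylow $p$-subgroup $P$ of $G$ is elementary abelian. Indeed, every orbit of $P$ on the $mp$ points has $p$-power length, and no such length can be $p^{2}$ or larger since $p^{2} > mp$; hence every orbit of $P$ has length $1$ or $p$, each transitive constituent of $P$ is cyclic of order $p$, and, as $G$ acts faithfully, $P$ embeds in a direct product of copies of the cyclic group of order $p$. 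This elementary abelian Sylow structure, in tandem with $p > \sqrt{mp}$, is the constraint I would carry through all subsequent cases.

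Next I would apply the O'Nan--Scott theorem to divide $G$ into the standard types and dispatch every non--almost-simple type by a degree-factorisation argument. In the affine case the degree is a prime power $r^{d}$; if $d \geq 2$ then the sole prime divisor satisfies $r \leq r^{d/2} = \sqrt{r^{d}}$, contradicting $p > \sqrt{n}$, so $d = 1$ and $n = r = p$ is prime, giving the soluble groups $G \leq \AGL{1}{p}$ of degree $p$. In the product-action and twisted wreath types the degree is a proper power $d^{k}$ with $k \geq 2$, whose largest prime divisor is at most $d = n^{1/k} \leq \sqrt{n}$, again contradicting $p > \sqrt{n}$. In the simple diagonal type the degree is $|T|^{k-1}$ for a non-abelian simple group $T$; for $k \geq 3$ this is a proper power handled as above, while for $k = 2$ the degree is $|T|$ and an order of the form $mp$ with $m < p$ would, by Sylow's theorem, force $n_{p} = 1$ and hence a normal Sylow $p$-subgroup, contradicting the simplicity of $T$. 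Thus only the almost simple type survives, and I am reduced to the situation $T \leq G \leq \Aut(T)$ with $T$ a non-abelian finite simple group acting primitively of degree $mp$.

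The remaining, and by far the hardest, task is to run through the families of the classification of finite simple groups and determine, for each, which primitive actions have degree $mp$ with $m < p$. Since primitive actions correspond to cosets of maximal subgroups, this amounts to listing the indices of the maximal subgroups of each $T$ (and of the groups $G$ between $T$ and $\Aut(T)$) and selecting those indices $n$ that admit a prime factor $p > \sqrt{n}$ of multiplicity one. For the alternating and sporadic groups this is a finite combinatorial computation: one checks the degrees $\binom{c}{k}$ and the action degrees on partitions for $\alt{c}$, and consults the known maximal-subgroup data for the sporadic groups. For the groups of Lie type one uses the Aschbacher subgroup structure, matching the order formulae and Zsygmondy primitive prime divisors against the constraint that $p > \sqrt{n}$ while $p$ divides $n$ exactly once; the elementary abelian Sylow $p$-subgroup derived above sharply limits how $p$ may divide $|T|$ and hence which parabolic and non-parabolic maximal subgroups can yield an index of the required shape. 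The survivors of this analysis are precisely the entries of \cite[Table 3]{LiebeckSaxl1985a}.

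I expect the classical groups to be the main obstacle. There the difficulty is twofold: one must control the indices of \emph{all} maximal subgroups across every family and every twisted and untwisted form, and one must carry out the delicate number theory deciding when the resulting order $|G|/|H|$ has a prime factor exceeding its square root. Isolating the finitely many exceptional configurations that genuinely produce such a large prime factor, while uniformly eliminating the rest, is where the weight of the classification of finite simple groups and the bulk of the case analysis reside.
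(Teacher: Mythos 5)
Your opening reductions are correct, and they in fact retrace the genuine first stage of Liebeck and Saxl's argument (which the paper only quotes, giving no proof of its own): from $m<p$ you rightly extract that $p$ divides the degree $n=mp$ exactly once and $p>\sqrt{n}$; the orbit-counting argument showing a Sylow $p$-subgroup is elementary abelian is sound (each nontrivial constituent is a transitive $p$-subgroup of $S_p$, hence of order exactly $p$, and faithfulness gives the embedding into a direct power of $C_p$); and the O'Nan--Scott dispatch is essentially complete --- the affine type collapses to $n=p$ and $G\leq\AGL{1}{p}$ soluble, the proper-power degrees of the product-action and twisted wreath types contradict $p>\sqrt{n}$, and your Sylow count $n_p\equiv 1\pmod p$ with $n_p\leq m<p$ killing the degree-$|T|$ diagonal case is a clean and correct touch.

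The genuine gap is the almost simple case, which is not a corner of this theorem but its entire content, and your proposed route through it would not merely be laborious --- it could not be executed as described. You propose ``listing the indices of the maximal subgroups of each $T$'' across all families; but complete lists of maximal subgroups of the classical groups did not exist in 1985 and do not exist now: Aschbacher's theorem organises them into geometric classes together with an unclassified class of almost simple irreducible subgroups, so an exhaustive index census is unavailable precisely where you concede the weight of the argument lies. Liebeck and Saxl circumvent this by leaning on the hypothesis you derived but then set aside: the point stabiliser has order divisible by a prime $p>\sqrt{n}$ dividing $|G|$ in a tightly constrained way, and prior results bounding or classifying subgroups of Lie-type groups containing elements of such large prime order (via primitive prime divisors in the Zsigmondy style) pin down the candidate stabilisers without any complete maximal-subgroup classification. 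Your closing sentence --- ``the survivors of this analysis are precisely the entries of \cite[Table 3]{LiebeckSaxl1985a}'' --- is therefore not a deduction but a restatement of the theorem; the skeleton matches the real proof, but the load-bearing case analysis is asserted rather than performed, and the specific mechanism you name for performing it would stall at the classical groups.
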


We do not reproduce Table~3 from \cite{LiebeckSaxl1985a},
though we do use it to analyse the various cases that arise.
Instead, we simply give a description for the corresponding case.
The groups that occur are almost simple groups, and they are
described according to the action of their socles in \cite[Table 3]{LiebeckSaxl1985a}.
We have also organised the various cases into sections, as follows.

\subsection{Alternating Socle}
There are, of course, for each odd prime $p$, the primitive groups
with socle the alternating group $\alt{2p}$ in its natural action.

\paragraph{Case 1}
The only other alternating groups that occur have degrees
$15$, $35$ or $\frac{c(c-1)}{2}$, where $p\in\{ c, c - 1 \}$.
The only primes for which the Diophantine equation $\binomial{c}{2} = 2p$
has a solution are $p = 3$ and $p = 5$.
This gives rise to Case (1) in Proposition~\ref{prop:prim2p}.

\subsection{Classical Socle}
There are a variety of cases involving classical groups.

\subsubsection{PSL}
There are a number of cases in which the socle is a projective
special linear group.

\paragraph{Case 2}
There is an action of $\PSL{d}{q}$ on $1-$ or $(d-1)$-dimensional subspaces.
The degrees are of the form
\begin{displaymath}
n = \frac{(q^{d} - 1)}{(q - 1)},
\end{displaymath}
where $p$ divides $n$ and $d\geq 2$.

Using $2p = n$, we have
\begin{displaymath}
2p = \frac{(q^{d} - 1)}{(q - 1)} = 1 + q + \cdots + q^{d-1}.
\end{displaymath}
If $q$ is even, then $1 + q + \cdots + q^{d-1}$ is odd, so $q$ must be odd.
Now, $1 + q + \cdots + q^{d-1}$ is an even sum of odd terms, so the number
$d$ of summands must be even.
Write $d = 2\delta$.

Suppose that $d > 2$, so that $\delta > 1$.
Then
\begin{displaymath}
q^{d} - 1 = (q^{\delta})^{2} - 1 = (q^{\delta} - 1)(q^{\delta} + 1).
\end{displaymath}
Hence,
\begin{displaymath}
2p = \frac{(q^{\delta} + 1)(q^{\delta} - 1)}{(q - 1)} = (q^{\delta} + 1)(1 + q + \cdots + q^{\delta - 1}).
\end{displaymath}
Since $q\geq 2$, we have
\begin{displaymath}
1 + q + \cdots + q^{\delta - 1} \geq 1 + q \geq 3.
\end{displaymath}
Hence, $1 + q + \cdots + q^{\delta - 1} = p$ and $q^{\delta} + 1 = 2$.
But then, $q^{\delta} = 1$ and $\delta = 0$, a contradiction.
Therefore, $d = 2$, and we get
\begin{displaymath}
2p = \frac{(q^{2} - 1)}{(q - 1)} = q + 1,
\end{displaymath}
and so $\frac{(q + 1)}{2}$ is a prime.
This yields Case (3) in Proposition~\ref{prop:prim2p}.

\paragraph{Case 3}

Next, there is an action of $\PSL{d}{q}$ on $2$- or $(d-2)$-dimensional subspaces,
where the degrees are of the form
\begin{displaymath}
n = \frac{(q^{d} - 1)(q^{d-1} - 1)}{(q^{2} - 1)(q - 1)},
\end{displaymath}
and where $d\geq 4$, and either $p = \frac{(q^{d-1} - 1)}{(q-1)}$
or $p$ divides $\frac{(q^{d} - 1)}{(q-1)}$.

To handle this case, suppose first that $p = \frac{(q^{d-1} - 1)}{(q-1)}$.
Then, using $2p = n$, we obtain
\begin{displaymath}
2\frac{(q^{d-1} - 1)}{(q-1)} = \frac{(q^{d} - 1)(q^{d-1} - 1)}{(q^{2} - 1)(q - 1)},
\end{displaymath}
which yields
\begin{displaymath}
2 = \frac{q^{d} - 1}{q^{2} - 1},
\end{displaymath}
or
\begin{displaymath}
2(q^{2} - 1) = q^{d} - 1.
\end{displaymath}
Dividing by $q - 1$, we obtain
\begin{displaymath}
2q + 2 = 2(q + 1) = 1 + q + \cdots + q^{d-1}.
\end{displaymath}
so that
\begin{displaymath}
q + 1 = q^{2} + \cdots + q^{d-1}.
\end{displaymath}
But, from $q\geq 2$ it follows that $q^{2} > q$, and
\begin{displaymath}
q + 1 < q^{2} + 1 < q^{2} + q^{3} + \cdots + q^{d-1},
\end{displaymath}
unless $d - 1 = 2$, so that $d = 3$.
But we assumed that $d\geq 4$, so this cannot be the case.

Now suppose that $p$ divides $\frac{(q^{d} - 1)}{(q - 1)}$,
and write
\begin{displaymath}
sp = \frac{(q^{d} - 1)}{(q - 1)},
\end{displaymath}
where $s$ is a positive integer.
Then we have (using $n = 2p$),
\begin{displaymath}
2p = sp\frac{(q^{d-1} - 1)}{(q^{2} - 1)},
\end{displaymath}
which gives
\begin{displaymath}
2(q^{2} - 1) = s(q^{d-1} - 1),
\end{displaymath}
or, dividing the common factor of $q - 1$ from both sides,
\begin{displaymath}
2(q + 1) = s(1 + q + \cdots + q^{d-2}).
\end{displaymath}
But $s\geq 1$, so we obtain
\begin{eqnarray*}
2(q + 1 ) &   =  & s(1 + q + \cdots + q^{d-2}) \\
          & \geq & 1 + q + \cdots + q^{d-2}.
\end{eqnarray*}
Now subtracting $q - 1$ from both sides of this inequality yields
\begin{displaymath}
q + 1 \geq q^{2} + \cdots + q^{d-2}.
\end{displaymath}
This can occur only if $d = 4$ so that there is only one summand on the right hand side,
in which case we get $q + 1 \geq q^{2}$.
But this is impossible, since $q\geq 2$ then implies that
\begin{displaymath}
0 \geq q^2 - q - 1 > q^2 - 2q + 1 = (q - 1)^{2} \geq 1.
\end{displaymath}
Therefore, this case cannot occur.

\paragraph{Case 4}

Next, we consider the action of $\PSL{7}{q}$ on  $3$- or $4$-dimensional subspaces,
where the degree is
\begin{displaymath}
n = \frac{(q^7 - 1)(q^6 - 1)(q^5 - 1)}{(q^3 - 1)(q^2 - 1)(q - 1)},
\end{displaymath}
and where $p = \frac{(q^7 - 1)}{(q - 1)}$.

Using $2p = n$, we obtain
\begin{displaymath}
2\frac{(q^7 - 1)}{(q - 1)} = \frac{(q^7 - 1)(q^6 - 1)(q^5 - 1)}{(q^3 - 1)(q^2 - 1)(q - 1)},
\end{displaymath}
which yields
\begin{displaymath}
2 = \frac{(q^6 - 1)(q^5 - 1)}{(q^3 - 1)(q^2 - 1)},
\end{displaymath}
or, equivalently,
\begin{displaymath}
2(q^3 - 1)(q^2 - 1) = (q^6 - 1)(q^5 - 1).
\end{displaymath}
But, since $q\geq 2$, we have
\begin{eqnarray*}
2(q^3 - 1)(q^2 - 1) &  <  & (8q^3 - 1)(8q^2 - 1) \\
                    &\leq & (q^6 - 1)(q^5 - 1),
\end{eqnarray*}
a contradiction.

\paragraph{Case 5}
Next, $\PSL{d}{q}$ acts on incident point-hyperplane pairs, with degree equal to
\begin{displaymath}
n = \frac{(q^d - 1)(q^{d-1} - 1)}{(q-1)^2},
\end{displaymath}
where $d\geq 3$
and $p$ divides $\frac{(q^d - 1)}{(q-1)}$
(and $G$ contains a graph automorphism).

Since $p$ divides $\frac{(q^d - 1)}{(q-1)}$, write
\begin{displaymath}
sp = \frac{(q^d - 1)}{(q-1)},
\end{displaymath}
where $s$ is a positive integer.
Then we have
\begin{displaymath}
2p = sp\frac{(q^{d-1} - 1)}{(q - 1)},
\end{displaymath}
so that
\begin{displaymath}
2(q - 1) = s(q^{d-1} - 1) = s(q - 1)(1 + q + \cdots + q^{d-2}).
\end{displaymath}
Dividing both sides by $q - 1$, we obtain
\begin{displaymath}
2 = s(1 + q + \cdots + q^{d-2}).
\end{displaymath}
Since $s\geq 1$, it follows that
\begin{displaymath}
2  \geq 1 + q + \cdots + q^{d-2},
\end{displaymath}
which is impossible, since $d\geq 3$.

\paragraph{Case 6}

Next, there is an action of $\PSL{d}{q}$ on non-incident point-hyperplane pairs, where the degree is
\begin{displaymath}
n = \frac{q^{d-1}(q^d - 1)}{(q-1)},
\end{displaymath}
and where $d\geq 3$
and $p$ divides $\frac{(q^d - 1)}{(q-1)}$
(and $G$ contains a graph automorphism).

Since $p$ must divide $\frac{(q^d - 1)}{(q-1)}$, write
\begin{displaymath}
sp = \frac{(q^d - 1)}{(q-1)},
\end{displaymath}
for some positive integer $s$.
Then we have $2p = n = spq^{d-1}$ or
\begin{displaymath}
2 = sq^{d-1} \geq q^{d-1} \geq q^{2} \geq 4,
\end{displaymath}
a contradiction.
Therefore, this case cannot occur.

\paragraph{Case 7}

Next, the action of $\PSL{4}{3}$ on the cosets of its subgroup of shape $\PSp{4}{3}.2$.
In this case, the degree is $117$ and $p = 13$.

This case does not occur, since $2p = 26\neq 117$.

Finally, there are several actions of $\PSL{2}{q}$, with $q\geq 4$, as follows.

\paragraph{Case 8}

The action on cosets of a dihedral subgroup of degree
\begin{displaymath}
n = \frac{q(q\pm 1)}{2},
\end{displaymath}
where $p = q$ if $q$ is odd, and $p = q\pm 1$ if $q$ is even.
In this case, $G = \PGL{2}{q}$, for $q = 7, 11$.

Suppose first that $q$ is odd, so $p = q$.
Then we get $2p = n = \frac{p(p\pm 1)}{2}$,
so that $4p = p(p\pm 1)$, and hence, $4 = p\pm 1$.
This implies that $p = 3$ or $p = 5$.

Now suppose that $q$ is even.
Then $p = q\pm 1$, so $2p = n = \frac{qp}{2}$
and $q = 4$ giving $p = 3$ or $p = 5$.
In this way, we obtain Case (5) of Proposition~\ref{prop:prim2p}.

\paragraph{Case 9}

The action of $\PSL{2}{q}$ on cosets of $\PGL{2}{\sqrt{q}}$,
with $p$ a divisor of $q+1$,
where $q$ is a square, and the degree is
\begin{displaymath}
n = \frac{\sqrt{q}(q+1)}{f},
\end{displaymath}
where $f = (2, q - 1)$.

Since $p$ divides $q + 1$, there is a positive integer $s$ for which
\begin{displaymath}
sp = q + 1.
\end{displaymath}
Then we have
\begin{displaymath}
2p = n = \frac{sp\sqrt{q}}{f},
\end{displaymath}
or
\begin{displaymath}
2f = s\sqrt{q}.
\end{displaymath}

First suppose that $q$ is even.
Then $q - 1$ is odd, so $f = 1$, and we get
$2 = s\sqrt{q}$ or $4 = s^{2}q$.
Since $q$ is a square, we can write $q = v^{2}$, for some integer $v\geq 2$.
Then we have $4 = s^{2}v^{2} = (sv)^{2}$.
Now, $v\geq 2$ so we have
\begin{displaymath}
4 = (sv)^{2} \geq (2s)^{2} \geq 4s^{2},
\end{displaymath}
so $s = 1$ and $p = q + 1$.
But $2p = \sqrt{q}(q + 1)$, so $2p = p\sqrt{q}$ and hence $\sqrt{q} = 2$.
This implies that $q = 4$.
Thus, $p = 5$ and $n = 10$.
Here, we have Case (6) in Proposition~\ref{prop:prim2p}.

Now suppose that $q$ is odd.
Then $f = 2$ and we have
\begin{displaymath}
2p = \frac{\sqrt{q}(q + 1)}{2},
\end{displaymath}
or
\begin{displaymath}
4p = \sqrt{q}(q + 1).
\end{displaymath}
Again, let $s$ be a positive integer such that $sp = q + 1$.
Then $4p = sp\sqrt{q}$ so that $4 = s\sqrt{q}$.
Now, $q\geq 4$ since $q$ is a square, so
\begin{displaymath}
4 = s\sqrt{q} \geq s\sqrt{4} = 2s,
\end{displaymath}
which implies that $s\leq 2$, so $s\in\{ 1, 2 \}$.

If $s = 1$, then $p = q + 1$ so $2p = \frac{p\sqrt{q}}{2}$ or $4 = \sqrt{q}$,
and so $q = 16$.
But we supposed that $q$ was odd, so this case does not occur.

If $s = 2$, we get $4p = 2p\sqrt{q}$ so that $\sqrt{q} = 2$; that is, $q = 4$.
Again, since $q$ is odd, this case does not occur either.

\paragraph{Case 10}

The action on cosets of $\alt{5}$, where $p = q$, $q\equiv\pm 1\pmod{10}$,
and the degree is
\begin{displaymath}
n = \frac{q(q^2 - 1)}{120},
\end{displaymath}
and $q\leq 109$.

From $2p = n$ and $p = q$, we obtain
\begin{displaymath}
2p = \frac{p(p^{2} - 1)}{120},
\end{displaymath}
from which it follows that
\begin{displaymath}
240p = p(p^{2} - 1),
\end{displaymath}
and hence, $p^{2} = 241$, which has no integer solutions.

\paragraph{Case 11}

The action on cosets of $\symm{4}$, where $p = q$, $q\equiv\pm 1\pmod{8}$,
and the degree is
\begin{displaymath}
n = \frac{q(q^2 - 1)}{48},
\end{displaymath}
where $q\leq 47$.

As in the previous case, we find that $p^{2} = 97$, which has no integer solutions.

\paragraph{Case 12}

The action on cosets of $\alt{4}$, where $p = q$, $q\equiv 3\pmod{8}$,
and the degree is
\begin{displaymath}
n = \frac{q(q^2 - 1)}{24},
\end{displaymath}
where $q\leq 19$.

Here, using $p = q$ and $2p = n$, we obtain $p^{2} = 49$, and so $p = 7$.
However, $7\not\equiv 3\pmod{8}$, so this case does not arise either.

\subsubsection{$\mathrm{PSp}$}

\paragraph{Case 13}
There is an action of the group $\PSp{2d}{q}$ on lines (or, on totally isotropic $2$-dimensional subspaces, in case $d = 2$),
with $p$ a divisor of $q^{d} + 1$ and $d$ a power of $2$, of degree
\begin{displaymath}
n = \frac{(q^{2d} - 1)}{(q - 1)}.
\end{displaymath}

Writing $q^{d} + 1 = sp$, for some positive integer $s$, and using $2p = n$, we obtain
\begin{eqnarray*}
2p & = & \frac{(q^{2d} - 1)}{(q - 1)} \\
   & = & \frac{(q^{d} + 1)(q^{d} - 1)}{(q - 1)} \\
   & = & sp \frac{(q^{d} - 1)}{(q - 1)}.
\end{eqnarray*}
Hence, we have
\begin{displaymath}
2 = s\frac{(q^{d} - 1)}{(q - 1)} = s(1 + q + \cdots q^{d-1}).
\end{displaymath}
As before, this is impossible for $d\geq 2$, since $s\geq 1$.

\subsubsection{$\mathrm{Sp}$}

\paragraph{Case 14}

For even $q$, there is an action of $\Sp{2d}{q}$ of degree
\begin{displaymath}
n = \frac{q^{d}(q^{d}\pm 1)}{2},
\end{displaymath}
where $p = q^{d}\pm 1$.
If $p = q^{d} + 1$, then $d$ is a power of $2$.
If $p = q^{d} - 1$, then $q = 2$ and $d$ is prime.

First suppose that $p = q^{d} + 1$, so that
\begin{displaymath}
2(q^{d} + 1) = \frac{q^{d}(q^{d} + 1)}{2},
\end{displaymath}
which yields
\begin{displaymath}
4 = q^{d}.
\end{displaymath}
Therefore, $q = d = 2$, and so $p = 5$ and $n = 10$.
This yields Case (7) of Proposition~\ref{prop:prim2p}.

If, instead, $p = q^{d} - 1$, then we obtain
\begin{displaymath}
2(q^{d} - 1) = \frac{q^{d}(q^{d} - 1)}{2},
\end{displaymath}
so that, again,
\begin{displaymath}
4 = q^{d},
\end{displaymath}
and so $p = 3$ and $n = 6$,
and we obtain Case (7) of Proposition~\ref{prop:prim2p} again.

\paragraph{Case 15}

There is an action of $\Sp{4}{q}$, for even $q$, of degree
\begin{displaymath}
n = \frac{q^{2}(q^{2} + 1)}{2},
\end{displaymath}
where $p = q^{2} + 1$.

In this case we get, from $n = 2p$,
\begin{displaymath}
2( q^{2} + 1 ) = \frac{q^{2}(q^{2} + 1)}{2},
\end{displaymath}
from which it follows that $q^{2} = 4$, and hence, $q = 2$ and $n = 10$.
Here we have Case (7) of Proposition~\ref{prop:prim2p} again.

\subsubsection{PSU}

\paragraph{Case 16}

The group $\PSU{d}{q}$ acts on singular $1$-subspaces, for prime $d\geq 3$,
with degree
\begin{displaymath}
n = \frac{(q^{d} + 1)(q^{d-1} - 1)}{(q^{2} - 1)},
\end{displaymath}
where $p$ divides $\frac{(q^{d} + 1)}{(q + 1)}$.

Since $p$ divides $\frac{(q^{d} + 1)}{(q + 1)}$, there is a positive integer $s$ for which
\begin{displaymath}
sp = \frac{(q^{d} + 1)}{(q + 1)}.
\end{displaymath}
Then, from $2p = n$, we obtain
\begin{displaymath}
2p = sp\frac{(q^{d-1} - 1)}{(q - 1)},
\end{displaymath}
so that
\begin{displaymath}
2(q - 1) = s(q^{d-1} - 1) = s(q - 1)(1 + q + \cdots + q^{d-2}).
\end{displaymath}
Dividing out the common factor of $q - 1$, we obtain
\begin{displaymath}
2 = s(1 + q + \cdots + q^{d-2}).
\end{displaymath}
Hence, as $s\geq 1$, we obtain
\begin{eqnarray*}
2 &   =  & s(1 + q + \cdots + q^{d-2}) \\
  & \geq & 1 + q + \cdots + q^{d-2} \\
  & \geq & 3,
\end{eqnarray*}
unless $d = 2$.
But we are given that $d\geq 3$, so this case cannot occur.

\subsubsection{$P\Omega$}

\paragraph{Case 17}

There is an action of $\POmega{2d+1}{q}$ on singular $1$-subspaces,
with degree
\begin{displaymath}
n = \frac{(q^{d} - 1)(q^{d-1} + 1)}{(q-1)},
\end{displaymath}
where $p = \frac{(q^d - 1)}{(q - 1)}$,
and $d > 4$ is prime.

Using $n = 2p$, we obtain
\begin{displaymath}
2\frac{(q^{d} - 1)}{(q - 1)} = \frac{(q^{d} - 1)(q^{d-1} + 1)}{(q-1)},
\end{displaymath}
which yields $q^{d-1} - 1 = 2$, or $q^{d-1} = 3$.
But $d > 4$, so this is impossible.

\paragraph{Case 18}

There is an action of $\POmega{2d+1}{q}$ on singular $1$-dimensional
subspaces, with degree
\begin{displaymath}
n = \frac{(q^{2d} - 1)}{(q-1)},
\end{displaymath}
where $p$ divides $q^d + 1$ and $d$ is a power of $2$.

Since $p$ divides $q^{d} + 1$, we can write $sp = q^{d} + 1$, for some positive integer $s$.
Then we have
\begin{displaymath}
2p = n = \frac{(q^{2d} - 1)}{(q - 1)} = \frac{(q^{d} + 1)(q^{d} - 1)}{(q - 1)} = sp\frac{(q^{d} - 1)}{(q - 1)}.
\end{displaymath}
Hence,
\begin{displaymath}
2 = s\frac{(q^{d} - 1)}{(q - 1)} = s(1 + q + \cdots + q^{d-1}).
\end{displaymath}
Since $s\geq 1$, we get
\begin{displaymath}
2 \geq 1 + q + \cdots + q^{d-1}.
\end{displaymath}
But, since $d\geq 2$ and $q\geq 2$, this is impossible,
and we conclude that this case cannot occur.

\subsubsection{$\Omega^{+}$}

\paragraph{Case 19}

There is an action of $\OmegaPlus{2d}{2}$ on non-singular subspaces,
for prime $d > 4$, where $p = 2^{d} - 1$, with degree
\begin{displaymath}
n = 2^{d-1}(2^{d} - 1).
\end{displaymath}
In this case, substituting $n = 2p$, we obtain
\begin{displaymath}
2p = 2^{d-1}(2^{d} - 1),
\end{displaymath}
which, for $p = 2^{d} - 1$ gives
\begin{displaymath}
2(2^{d} - 1) = 2^{d-1}(2^{d} - 1),
\end{displaymath}
which yields $2 = 2^{d-1}$. Hence, $d = 2$.
But we began with $d > 4$,
so this case cannot occur.

\subsubsection{$P\Omega^{+}$}

\paragraph{Case 20}

There is an action of $\POmegaPlus{2d}{q}$ with $p$ a divisor of $q^{d} + 1$
and $d\geq 4$ a power of $2$ with degree either
\begin{displaymath}
n = \frac{(q^{d} + 1)(q^{d - 1} - 1)}{(q-1)}
\end{displaymath}
or
\begin{displaymath}
n = q^{d-1}(q^{d} + 1).
\end{displaymath}

Suppose first that we have degree $n = q^{d-1}(q^{d} + 1)$.
Write $sp = q^{d} + 1$, for some positive integer $s$.
Then we have
\begin{displaymath}
2p = n = q^{d-1}(q^{d} + 1) = spq^{d-1},
\end{displaymath}
or
\begin{displaymath}
2 = sq^{d-1}.
\end{displaymath}
Thus, either $s = 1$ and $q^{d-1} = 2$ and so $q = 2$ and $d = 2$,
or else $s = 2$ and we get $q^{d-1} = 1$, so $d = 1$.
Thus, this case cannot occur.

Now assume that the degree $n$ is
\begin{displaymath}
n = \frac{(q^{d} + 1)(q^{d - 1} - 1)}{(q-1)}.
\end{displaymath}
Using $sp = q^{d} + 1$, we obtain
\begin{displaymath}
2p = sp\frac{q^{d-1} - 1)}{(q - 1)} = sp(1 + q + \cdots + q^{d-2}).
\end{displaymath}
Hence, since $s\geq 1$, we have
\begin{displaymath}
2 \geq 1 + q + \cdots + q^{d-2}.
\end{displaymath}
Now, since $d \geq 4$, therefore, this is impossible.

\subsection{Exceptional Socle}
\subsubsection{$Sz(q)$}

\paragraph{Case 21}

Here, the socle is the group $Sz(q)$,
with degree $q^{2} + 1$ and $p\mid q^{2} + 1$, $p > q$, $q = 2^{2m+1}$.

We have
\begin{displaymath}
2p = q^{2} + 1 = 2^{2(2m + 1)} + 1,
\end{displaymath}
which is impossible, since $2^{2(2m+1)} + 1$ is odd, while $2p$ is even.

\subsubsection{${}^2G_{2}(q)$}

\paragraph{Case 22}

In this case, we consider groups with socle
the Ree group $R(q) ={}^{2}G_{2}(q)$ with degree $q^{2} + 1$,
$p\mid q^{2} + q + 1$, $p > \sqrt{n}$, $q = 3^{2m+1}$.

Since $p$ divides $q^{2} + q + 1$, there is a positive integer $s$ such that $sp = q^{2} + q + 1$.
Then we have
\begin{displaymath}
2p = n = q^{2} + 1 = q + sp,
\end{displaymath}
or
\begin{displaymath}
q = p(2 - s).
\end{displaymath}
Hence,
\begin{displaymath}
p(2 - s) = 3^{2m + 1}.
\end{displaymath}
Thus, $p = 3$, so $n = 6$.
But then $6 = 3^{2(2m+1)} + 1$,
so $3^{2(2m+1)} = 5$, a contradiction.
Therefore, this case cannot occur.

\subsection{Sporadic Socle}

\paragraph{Case 23}
The only sporadic simple groups that occur are the Mathieu groups,
of degrees $276, 23, 253, 506, 22, 77, 66, 11, 55$ and $66$,
and the sporadic groups $J_{1}$ of degree $266$ and the Conway
group $Co_{2}$ of degree $276$.
Of these, only $22 = 2\cdot 11$ is twice a prime number,
which is Case (2) in Proposition~\ref{prop:prim2p}.

This completes the proof of Proposition~\ref{prop:prim2p}.{\qed}

\bibliographystyle{abbrvurl}
\bibliography{magma}

\begin{thebibliography}{10}

\bibitem{AndruskiewitschGrana2003}
N.~Andruskiewitsch and M.~Gra{\~n}a.
\newblock From racks to pointed {Hopf} algebras.
\newblock {\em Adv. Math.}, 178:177--243, 2003.

\bibitem{Blackburn2012}
S.~R. Blackburn.
\newblock Enumerating finite racks, quandles and kei, March 2012.
\newblock \href {http://arxiv.org/abs/1203.6504v1 [math.GT]}
  {\path{arXiv:1203.6504v1 [math.GT]}}.

\bibitem{Burnside1901}
W.~Burnside.
\newblock On some properties of groups of odd order.
\newblock {\em J. London Math. Soc.}, 33:162--185, 1901.

\bibitem{Carter2010}
J.~S. Carter.
\newblock A survey of quandle ideas, February 2010.
\newblock \href {http://arxiv.org/abs/1002.4429v2 [math.GT]}
  {\path{arXiv:1002.4429v2 [math.GT]}}.

\bibitem{Clauwens2011}
F.~J.~B.~J. Clauwens.
\newblock Small connected quandles, July 2011.
\newblock \href {http://arxiv.org/abs/1011.2456v2 [math.GR]}
  {\path{arXiv:1011.2456v2 [math.GR]}}.

\bibitem{EtingofGuralnickSoloviev2001}
P.~Etingof, R.~Guralnick, and A.~Soloviev.
\newblock Indecomposable set-theoretical solutions to the quantum
  {Y}ang-{B}axter equation on a set with a prime number of elements.
\newblock {\em J. Algebra}, 242(2):709--719, 2001.
\newblock \href {http://arxiv.org/abs/0007170v1 [math.QA]}
  {\path{arXiv:0007170v1 [math.QA]}}.

\bibitem{FennRourke1992}
R.~Fenn and C.~Rourke.
\newblock Racks and links in codimension two.
\newblock {\em J. Knot Theory Ramifications}, 1:343--406, 1992.

\bibitem{FermanNowikTeicher2008}
A.~Ferman, T.~Nowik, and M.~Teicher.
\newblock On the structure and automorphism group of finite alexander quandles.
\newblock Preprint, November 2008.
\newblock \href {http://arxiv.org/abs/0811.4211v1 [math.GT]}
  {\path{arXiv:0811.4211v1 [math.GT]}}.

\bibitem{Grana2004}
M.~Gra{\~n}a.
\newblock Indecomposable racks of order {$p^2$}.
\newblock {\em Beitr\"age Alg. Geom.}, 45(2)::665--676, 2004.

\bibitem{HendersonMacedoNelson2006}
R.~Henderson, T.~Macedo, and S.~Nelson.
\newblock Symbolic computation with finite quandles.
\newblock {\em J. Symb. Comp.}, 41:811--817, 2006.
\newblock \href {http://arxiv.org/abs/0508351v2 [math.GT]}
  {\path{arXiv:0508351v2 [math.GT]}}.

\bibitem{HoNelson2005}
B.~Ho and S.~Nelson.
\newblock Matrices and finite quandles.
\newblock {\em Homology, Homotopy and Applications}, 7(1):197--208, 2005.
\newblock \href {http://arxiv.org/abs/0412417v3 [math.GT]}
  {\path{arXiv:0412417v3 [math.GT]}}.

\bibitem{MO75672}
D.~Holt and L.~Vendramin.
\newblock Finite simple groups and conjugacy classes with 2p elements.
\newblock MathOverflow.
\newblock \url{http://mathoverflow.net/questions/75672} (version: 2011-09-17).
\newblock Available from: \url{http://mathoverflow.net/questions/75672}, \href
  {http://arxiv.org/abs/http://mathoverflow.net/questions/75672}
  {\path{arXiv:http://mathoverflow.net/questions/75672}}.

\bibitem{Joyce1982a}
D.~Joyce.
\newblock A classifying invariant of knots, the knot quandle.
\newblock {\em Journal of Pure and Applied Algebra}, 23(1):37--65, 1982.
\newblock \href {http://dx.doi.org/DOI: 10.1016/0022-4049(82)90077-9}
  {\path{doi:DOI: 10.1016/0022-4049(82)90077-9}}.

\bibitem{Kamada2002}
S.~Kamada.
\newblock Knot invariants derived from quandles and racks.
\newblock {\em Geometry and Topology Monographs}, 4:103--117, 2002.

\bibitem{LiebeckSaxl1985a}
M.~W. Liebeck and J.~Saxl.
\newblock Primitive permutation groups containing an element of large prime
  order.
\newblock {\em J. London Math. Soc.}, 31:250--264, 1985.

\bibitem{Matveev1982}
S.~V. Matveev.
\newblock Distributive groupoids in knot theory.
\newblock {\em Mat. Sbornik (N.S.)}, 119(1):78--88, 1982.

\bibitem{OEIS:A181769}
J.~McCarron.
\newblock A181769 (number of isomorphism classes of quandles of order n).
\newblock \url{http://oeis.org/A181769}.

\bibitem{OEIS:A181771}
J.~McCarron.
\newblock A181771 (number of isomorphism classes of connected quandles of order
  n).
\newblock \url{http://oeis.org/A181771}.

\bibitem{McCarron2012a}
J.~McCarron.
\newblock Small homogeneous quandles.
\newblock In {\em Proceedings ISSAC 2012}, Grenoble, France, July 2012.
\newblock (to appear).

\bibitem{OEIS}
N.~J. Sloane and \textit{et al}.
\newblock Online encyclopedia of integer sequences.
\newblock \url{http://oeis.org}.

\bibitem{Vendramin2011}
L.~Vendramin.
\newblock On the classification of quandles of low order, June 2011.
\newblock \href {http://arxiv.org/abs/1105.5341v2 [math.GT]}
  {\path{arXiv:1105.5341v2 [math.GT]}}.

\bibitem{Vendramin2012}
L.~Vendramin.
\newblock On the classification of quandles of low order.
\newblock {\em J. Knot Theory Ramifications}, 21, August 2012.
\newblock \href {http://dx.doi.org/10.1142/S0218216512500885}
  {\path{doi:10.1142/S0218216512500885}}.

\end{thebibliography}

\end{document}